\def\ps@pprintTitle{%
 \let\@oddhead\@empty
 \let\@evenhead\@empty
 \def\@oddfoot{\centerline{\thepage}}%
 \let\@evenfoot\@oddfoot}
\newtheorem{theorem}{Theorem}
\newtheorem{lemma}{Lemma}
\newtheorem{proposition}{Proposition}
\newtheorem*{remark}{Remark}
\newcommand*\xbar[1]{%
  \hbox{%
    \vbox{%
      \hrule height 0.5pt 
      \kern0.4ex
      \hbox{%
        \kern-0.15em
        \ensuremath{#1}%
        \kern-0.15em
      }%
    }%
  }%
}
\begin{document}

\begin{frontmatter}

\title{Conditional Speed of Branching Brownian Motion, Skeleton Decomposition and
Application to Random Obstacles}

\author[koc,ozyegin]{Mehmet \"{O}z}
\ead{mehmet.oz@ozyegin.edu.tr}

\author[koc]{Mine \c{C}a\u{g}lar}
\ead{mcaglar@ku.edu.tr}

\author[colorado]{J\'{a}nos Engl\"{a}nder}
\ead{janos.englander@colorado.edu}

\address[koc]{Department of Mathematics, Ko\c{c} University, Istanbul, Turkey}
\address[colorado]{Department of Mathematics, University of Colorado at
Boulder, Boulder, CO-80309, USA}
\address[ozyegin]{Department of Natural and Mathematical Sciences, Faculty of Engineering, \"{O}zye\u{g}in University, Istanbul, Turkey}

\begin{abstract}
We study a branching Brownian motion $Z$ in $\mathbb{R}^d$, among obstacles scattered
according to a Poisson random measure with a radially decaying intensity. Obstacles
are balls with constant radius and each one works as a trap for the whole motion when
hit by a particle. Considering a general offspring distribution, we derive the decay
rate of the annealed probability that none of the particles of $Z$ hits a trap,
asymptotically in time $t$. This proves to be a rich problem motivating the proof of
a more general result about the speed of branching Brownian motion conditioned on
non-extinction. We provide an appropriate ``skeleton" decomposition for the underlying
Galton-Watson process when supercritical and show that the ``doomed" particles do not contribute to the asymptotic decay rate.
\end{abstract}

\vspace{3mm}

\begin{keyword}
Branching Brownian motion \sep Poissonian traps \sep Random environment  \sep Hard
obstacles \sep Rightmost particle
\vspace{3mm}
\MSC[2010] 60J80 \sep 60K37 \sep 60F10
\end{keyword}

\end{frontmatter}

\pagestyle{myheadings}
\markright{CONDITIONAL SPEED OF BBM AND RANDOM OBSTACLES\hfill}

\section{Introduction}

Branching Brownian motion (BBM) among random obstacles functioning as traps has been studied recently in \cite{E2000,E2003,E2008,LV2009,OC2013}. We study a BBM that evolves in $\mathbb{R}^d$, where a radially decaying field of Poissonian traps is present. We are mainly interested in the trap-avoiding probability of the system up to time $t$, where the underlying Galton-Watson process (GWP) has a general offspring distribution. Investigation of this problem leads us to finding the speed of BBM conditioned on non-extinction, which is of independent interest.

Let $Z=(Z(t))_{t\geq 0}$ be a $d$-dimensional BBM with branching rate $\beta>0$ and offspring
distribution $\lambda$. The process starts with a single particle at the origin,
which performs a Brownian motion in $\mathbb{R}^d$ for a random time which is
distributed exponentially with constant parameter $\beta$. Then, the particle dies
and simultaneously gives birth to a random number of particles distributed according
to the offspring distribution $\lambda$, which is a probability measure on
$\mathbb{N}=\left\{0,1,2,\ldots\right\}$. Similarly, each offspring particle repeats
the same procedure independently of all others, starting from the position of her
parent. In this way, one obtains a measure-valued Markov process $Z=(Z(t))_{t\geq0}$,
where $Z(t)$ can be identified as a particle configuration for given $t\geq 0$. By assumption, $Z(0)=\delta_0$.
The total mass process $|Z|=(|Z(t)|)_{t\geq 0}$ is a continuous time branching process
with rate $\beta$, and the number of particles in generation $n$ of $|Z|$ is a GWP with offspring distribution $\lambda$. The initial particle present at $t=0$ constitutes the $0$th generation, the offspring of the initial particle constitute the $1$st generation, and so forth. We denote the extinction time of the process $|Z|$ by $\tau$, which is formally defined as
$\tau=\inf\left\{t\geq 0:|Z(t)|=0\right\}$, where we use the convention that $\inf\emptyset=\infty$. We then denote the event of extinction of the process $|Z|$ by $\mathcal{E}$, and formally write $\mathcal{E}=\left\{\tau<\infty \right\}$. We use the term \textit{non-extinction} for the event $\mathcal{E}^c$. Let $P$ be the probability corresponding to the process $Z$, and $E$ the corresponding expectation.

The branching Brownian motion is assumed to live in a random environment consisting
of Poissonian traps. Let $\mathbb{B}(\mathbb{R}^d)$ be the $d$-dimensional Borel
sets, and let $\Pi$ denote the Poisson random measure on $\mathbb{B}(\mathbb{R}^d)$,
with a spatially dependent locally finite mean measure $\nu$ such that
$\text{d}\nu/\text{d}x$ exists and is continuous on $\mathbb{R}^d$ and
\begin{equation} \frac{\text{d}\nu}{\text{d}x} \sim \frac{l}{|x|^{d-1}}, \quad
|x|\rightarrow\infty, \quad l>0, \label{eq0}
\end{equation}
where $\text{d}x$ is for the Lebesgue measure. A random trap configuration $K$ with
radius $a$ on $\mathbb{R}^d$ is defined as
\begin{equation}
K=\underset{x_i \in
\text{supp}(\Pi)}{\bigcup} \bar{B}(x_i,a), \label{eq00}
\end{equation}
where $\bar{B}(x_i,a)$ is the closed ball of radius $a>0$ centered at $x_i$. Let
$\mathbb{P}$ be the probability for the Poisson random measure, and $\mathbb{E}$ the
corresponding expectation.

For $A\in \mathbb{B}(\mathbb{R}^d)$ and $t\geq 0$, let $|\text{supp}(Z(t))\cap A|$ be the number
of particles located in $A$ at time $t$. For $t\geq 0$, let
\begin{equation} R(t)=\underset{s \in\text{[0,t]}}{\bigcup} \text{supp}(Z(s)) \label{range}
\end{equation}
be the range of $Z$ up to time $t$. Now let $T$ be the first time that $Z$ hits a
trap,
\begin{equation} T=\inf \left\{t \geq 0:|\text{supp}(Z(t))\cap K|>0\right\}=\inf \left\{t
\geq 0:R(t)\cap K \neq \emptyset \right\}.  \nonumber
\end{equation}
Then, the event of trap-avoiding up to time $t$ of $Z$ among the Poissonian traps is
given by $\left\{T>t\right\}$.

We aim to analyze $(\mathbb{E}\times P)(T>t)$, the annealed (averaged) trap-avoiding probability of the system up to time $t$, for a BBM with a general offspring distribution $\lambda$. In \cite{E2003}, the asymptotic decay of the annealed probability has been found as $t\rightarrow\infty$ in the strictly dyadic branching case, that is, $\lambda(2)=1$. On the way to its generalization, we find out that the problem proves to be rich enough to require several stand-alone results that we also contribute in this paper. In particular, we find the speed of BBM conditioned on non-extinction as given in Theorem~\ref{teo1}. The reader may have the feeling that the result is expected and even `folklore,' however we were unable to locate the result and its proof in the existing literature, and feel that it is useful to state and prove it here in a self-contained way. When the GWP is supercritical, the particles are grouped into those with infinite or finite line of descent, so-called ``skeleton" and ``doomed" particles, respectively. In other words, a skeleton decomposition is performed to analyze the problem. Our analysis indicates that the doomed particles do not contribute to the conditional speed. Here, the `speed' of BBM refers to the growth rate of the radius of the minimal ball containing the range of the BBM. It has first been studied by McKean \cite{MK1975,MK1976}, later by Bramson \cite{B1978,B1983} and Chauvin and Rouault \cite{CR1988}, and is still subject to active research.

In Theorem~\ref{teo2}, we prove the asymptotic decay of the annealed trap-avoiding probability as a large deviation result. It is an important and non-trivial application of Theorem~\ref{teo1}. The problem of trap-avoiding asymptotics for BBM among Poissonian traps has been previously studied by Engl\"{a}nder \cite{E2000} for the case $d\geq 2$ and where the trap intensity was uniform. Then, in search for extending the result to the case $d=1$, Engl\"{a}nder and den Hollander \cite{E2003} considered the more interesting case where the trap intensity was radially decaying as given in (\ref{eq0}). Both \cite{E2000,E2003} assumed  strictly dyadic underlying GWP (precisely two offspring).

The reason the decay rate given in \eqref{eq0} is the `interesting' one is that it is in fact the `borderline' one. This is explained in Theorem 1.3 in \cite{E2003}, which describes the optimal survival strategy, as it depends on the `fine tuning constant' $\ell$ (we use  $l$ instead of $\ell$ in the present paper). Namely, it was shown that 
\begin{itemize}
\item[--]
In the low intensity regime $\ell<\ell_{cr}$, the system clears a ball of
radius $\sqrt{2\beta}\,t$ from traps, and until time $t$ stays inside this ball and
branches at rate $\beta$.

\item[--]
In the high intensity regime $\ell>\ell_{cr}$:
\begin{itemize}
\item[$d=1$:]
The system clears an $o(t)$-ball (i.e., a ball with radius $>a$ but $\ll t$),
and until time $t$ suppresses the branching (i.e., produces a polynomial number
of particles) and stays inside this ball.

\item[$d\geq 2$:]
The system clears a ball of radius $\sqrt{2\beta}\,(1-\eta^*)t$
around a point at distance $c^*t$ from the origin, suppresses the
branching until time $\eta^*t$,  and during the remaining time
$(1-\eta^*)t$  branches at rate $\beta$.

\end{itemize}
\end{itemize}
(See Theorem 1.3 in \cite{E2003} for the precise statements.)

Hence, the decay considered is indeed the `borderline' one, where the behavior of the system, conditioned on survival up to $t$, depends only on the constant $\ell$, and exhibits a change of behavior at the crossover. If one considers a larger (smaller) decay order, the optimal strategy will simply follow the one exhibited when the decay is as in \eqref{eq0} and $\ell>\ell_{cr}$ ($\ell<\ell_{cr}$); although if the decay order is very large, then $\eta^*=1$  (complete suppression of branching) may occur even for $d\ge 2$, while $0<\eta^*<1$ is always the case in the high intensity regime studied in \cite{E2003}.

This `borderline property' of the decay in \eqref{eq0} will remain the case in our paper, the main difference being that we consider a general offspring distribution, and thus we have to take into account that extinction for the GWP may have positive probability.

In \"{O}z and \c{C}a\u{g}lar \cite{OC2013} already a general offspring distribution  was studied, albeit with uniform Poisson intensity. In the present work, we consider a radially decaying trap intensity as in (\ref{eq0}), and a general offspring distribution for the underlying GWP. We show that, conditioned on non-extinction, the doomed particles of the decomposition of the supercritical GWP do not contribute to the asymptotic decay rate of the trap-avoiding probability. In comparison with the strictly dyadic case, the mean and another functional of the offspring distribution appear as parameters in the rate expression (see (\ref{I})). We refer the reader to \cite{E2007} for an interesting survey article on the topic of BBM among Poissonian traps, and to \cite{E2008,LV2009} for various related problems.

Theorem~\ref{teo3} is a technical result identifying the rate function of Theorem~\ref{teo2} through tedious analysis.

The annealed setting is interesting in that it provides a connection between the trapping problem and the `branching Brownian sausage' (see \cite{E2000}).
If $Z_t^a$ denotes the $a$-neighborhood ($a>0$), of the range $R(t)$:
\[
Z_t^a := \underset{x\in R(t)}{\bigcup} \bar{B}(x,a)\, ,
\]
then, using the analogy of the well-known `Wiener-sausage', $Z_t^a$ describes the `sausage' for the branching process up to $t$, called the branching Brownian sausage. The connection with the annealed obstacle problem is that
\[
E\,e^{-\nu(Z_t^a)}=(\mathbb{E}\times P)(T>t)\, ,
\]
or, alternatively, here $P,E$ can be replaced by $P(\cdot | {\cal E}^c)$, $E(\cdot | {\cal E}^c)$, too.

If $d=1$ and $d\nu /dx$ is precisely $l$, then we get
\[
E\, e^{-l|Z_t^a|}=(\mathbb{E}\times P)(T>t)\, ,
\]
where $|Z_t^a|$ is the volume of the sausage (or with $P(\cdot | {\cal E}^c)$). Since $d=1$, $Z_t^a$ is the same as the interval $(m_t -a, M_t+a)$, where $m_t:=\min_{x\in R(t)} \{x\}$ and $M_t:=\max_{x\in R(t)} \{x\}$. So, our asymptotics describes the large deviations for $M_t-m_t$. If $d\ge 2$, then the difference is that instead of the volume $|Z_t^a|$, it yields a result on $\nu(Z_t^a)$ (which is a kind of `Riesz-potential').

We have studied the first time a particle of BBM hits a trap rather than the time $\widetilde{T}$ that all particles are absorbed by the traps. The reason is that there is no tail for it is known that $\underset{t\rightarrow\infty}{\lim}(\mathbb{E}\times P)(\widetilde T>t)>0$  as shown in \cite[Thm.5.4]{E2015}. Heuristically, this follows from the fact that the system may survive by suppressing the traps in the ball $B(0,R)$ with $R>R_0$, where $B(0,R)$ is the open ball of radius $R$ centered at the origin, and $R_0$ is chosen such that even the branching process with killing at $\partial B(0,R)$ may survive forever.

The organization of the paper is as follows. In Section 2, we give the statements of Theorem~\ref{teo2} and Theorem~\ref{teo3} as our main results. Section 3 includes  the essential lemmas together with their proofs. In Section 4, we state and prove Theorem 3. In sections 5 and 6, we prove Theorems 1 and 2, respectively.

\section{Main Results}
To formulate our main results, we introduce further notation. Let $f$ be the probability generating function (p.g.f.) of the offspring distribution, and $\mu$ be the mean number of offspring,
\[f(s)=\sum_{j=0}^{\infty} \lambda(j) s^j,
\]
\[\mu=\sum_{j=0}^{\infty}j \lambda(j),
\]
and define
\[m=\mu-1.
\]
Throughout this work, we assume that $\mu<\infty$ and without loss of generality that $\lambda(1)=0$ (see the proof of Lemma~\ref{decomp}).
Now let $q=P(\mathcal{E})$ be the probability of extinction for the underlying GWP, and let
\[\alpha=1-f'(q)
\]
Note that $\lambda(0)=0$ implies that $q=0$ since two or more offspring is produced each time a particle branches. When $\lambda(0)=0$, it then follows that $\alpha=1$ as $f'(0)=\lambda(1)$, and $\lambda(1)=0$ by assumption. Also, $q=1$ when $\mu \leq 1$, and $q\in(0,1)$ when $\lambda(0)>0$ and $\mu>1$.

For $r,b \geq 0$, define
\begin{equation} g_d(r,b)=\int_{B(0,r)} \frac{\text{d}x}{|x+be|^{d-1}}, \nonumber
\end{equation}
where $e=(1,0,\ldots,0)$ is the unit vector in the direction of the first coordinate. Next, let
\begin{equation} l_{cr}^*=l_{cr}^*(m,\beta,d)=\frac{1}{s_d}\sqrt{\frac{\beta}{2m}}, \label{lcr}
\end{equation}
where $s_d$ is the surface area of the d-dimensional unit ball ($s_1=2$, $s_2=2\pi$, $s_3= 4\pi$, etc.).

We now state our main results. Recall that $l$ is the constant in the definition of $\nu$ in (\ref{eq0}), and $a$ is the constant trap radius given in (\ref{eq00}).
\begin{theorem}[Variational formula] \label{teo2}
Fix $d$,$f$,$\beta$,$a$. Define

\begin{equation} I(l,f,\beta,d)=\underset{\eta \in [0,1],c \in [0,\sqrt{2\beta}]}{\text{min}} \left\{\beta\alpha\eta+\frac{c^2}{2\eta}+l g_d(\sqrt{2\beta m}(1-\eta),c)\right\}. \label{I}
\end{equation}
(For $\eta=0,c=0$, set $c^2/2\eta=0$, and for $\eta=0,c>0$, set $c^2/2\eta=\infty$.)

\begin{enumerate}
\item If $\lambda(0)=0$, then
\begin{equation} \underset{t \rightarrow \infty}{\lim} \frac{1}{t}\log (\mathbb{E}\times P) \left(T>t\right) = -I(l,f,\beta,d).  \nonumber
\end{equation}

\item If $\lambda(0)>0$ and $m>0$, then
\begin{equation} \underset{t \rightarrow \infty}{\lim} \frac{1}{t}\log (\mathbb{E}\times P) \left(T>t|\mathcal{E}^c\right) = -I(l,f,\beta,d). \nonumber
\end{equation}

\item If $\lambda(0)>0$ and $m\leq 0$, then
\begin{equation} \underset{t \rightarrow \infty}{\lim} (\mathbb{E}\times P) \left(T>t\right)=(\mathbb{E}\times P) \left(T>\tau\right)>0,  \nonumber
\end{equation}
where $\tau=\inf\left\{t\geq 0:|Z(t)|=0\right\}$ is the extinction time  and $\mathcal{E}$ is the event of extinction for the BBM.

\end{enumerate}
\end{theorem}

In comparison with the variational result given in \cite[Theorem 1.1]{E2003} for dyadic
branching, Theorem~\ref{teo2} contributes two extra factors, both naturally
depending only on $f$, in the expression for the rate function (\ref{I}).
The extra factor $\alpha$ in the first term appears
when $\lambda(0)>0$ (if $\lambda(0)=0$, then $\alpha=1$). It is a consequence
of the modified p.g.f. for the offspring distribution of the skeleton
particles, which are part of the decomposition of the supercritical GWP
conditioned on non-extinction. The extra factor $m$ inside $g_d$ appears,
because the speed of BBM is now no longer $\sqrt{2\beta}$ but $\sqrt{2\beta
m}$. Since $\alpha=1=m$ for strictly dyadic branching, the result in \cite{E2003} is
recovered by (\ref{I}) as a special case.

The next result is purely analytic, and solves the variational problem for the rate function $I(l,f,\beta,d)$. We generalize the corresponding result in \cite{E2003}.

\begin{theorem}[Crossover] \label{teo3}
Fix $f$,$\beta$,$a$.

\begin{enumerate}

\item For $d\geq 1$ and all $l\neq l_{cr}$, where $l_{cr}$ is defined below, the variational problem has a unique pair of minimizers, denoted by $\eta^*=\eta^*(l,f,\beta,d)$ and $c^*=c^*(l,f,\beta,d)$.

\vspace{5mm}

\item
For $d=1$,
\begin{align}  &l\leq l_{cr}:\quad I(l,f,\beta,d)=\beta \frac{l}{l_{cr}^*}, \nonumber \\ &l>l_{cr}:\quad I(l,f,\beta,d)=\beta \alpha, \label{eq51}
\end{align}
and
\begin{align}  &l<l_{cr}:\quad \eta^*=0, \ c^*=0, \nonumber \\  &l>l_{cr}:\quad \eta^*=1, \ c^*=0, \label{eq52}
\end{align}

where $l_{cr}=\alpha l_{cr}^*=\frac{\alpha}{2}\sqrt{\frac{\beta}{2m}}$.

\vspace{5mm}

\item
For $d\geq2$,
\begin{align}  &l\leq l_{cr}:\quad I(l,f,\beta,d)=\beta \frac{l}{l_{cr}^*}, \nonumber \\ &l>l_{cr}:\quad I(l,f,\beta,d)<\beta(\alpha \wedge\frac{l}{l_{cr}^*}), \label{eq53}
\end{align}
and
\begin{align}  &l<l_{cr}:\quad \eta^*=0, \ c^*=0, \nonumber \\  &l>l_{cr}:\quad 0<\eta^*<1, \ 0<c^*<\sqrt{2\beta}, \label{eq54}
\end{align}

where $l_{cr}=\gamma_d l_{cr}^*$, with

\begin{equation} \gamma_d=\frac{-1+\sqrt{1+4m\alpha M_d^2}}{2m M_d^2}\in (0,\alpha), \label{eq55}
\end{equation}

where

\begin{equation} M_d=\frac{1}{2s_d m}\underset{R \in(0,\infty)}{\max}[g_d(R,0)-g_d(R,1)]. \label{eq56}
\end{equation}

Furthermore, $c^*>\sqrt{2\beta m}(1-\eta^*)$ when $l>l_{cr}$.

\end{enumerate}
\end{theorem}

Comparing this theorem to the corresponding one in \cite{E2003}, we see that in this case the extra factors $\alpha$ and $m$ appear ubiquitously in the formulas. Again, we note that the formulas above reduce to the corresponding ones in \cite{E2003} upon setting $\alpha=1$, $m=1$.

\section{Preparations}

In this section we prove five preparatory lemmas. Lemma~\ref{intensity} is needed directly in the proof of Theorem~\ref{teo2}. Lemma~\ref{overprod} and Lemma~\ref{comparison} are needed in the proof of Proposition~\ref{prop1}, which, in turn, is needed for proving Theorem~\ref{teo1}; the last two results
play  central roles in the proof of Theorem~\ref{teo2}. Lemma~\ref{decomp} is needed in the proof of both Theorem~\ref{teo2} and Theorem~\ref{teo1} (Lemma~\ref{hittingtimes} is needed too to prove the latter one). Finally, Lemma~\ref{hittingtimes} is also used in the proof of Proposition~\ref{prop1}.

\begin{lemma} \label{intensity}
Let $r>0$ and $b>0$. Then,
\begin{equation} \underset{t\rightarrow \infty}{\lim} \frac{1}{lt}\nu(B(bte,rt))=g_d(r,b). \nonumber
\end{equation}
\end{lemma}

\begin{proof} By substituting $y=x/t$, we see that
\begin{equation}
g_d(rt,bt)=\int_{B(0,rt)} \frac{\text{d}x}{|x+bte|^{d-1}}=t \int_{B(0,r)} \frac{\text{d}y}{|y+be|^{d-1}}=tg_d(r,b). \label{eq1}
\end{equation}
Recall that integrals of asymptotically equivalent positive continuous functions are asymptotically equivalent if one of the integrals diverges. Apply this result to the radial integral. By assumption, $\text{d}\nu/\text{d}x$ is continuous on $\mathbb{R}^d$. As the $d$-dimensional volume element has radial component $|x|^{d-1}\text{d}|x|$, the integrands of both radial integrals below are continuous on $\mathbb{R}_+$. Hence, we have
\begin{equation} \nu(B(bte,rt))=\int_{B(bte,rt)} \frac{\text{d}\nu}{\text{d}x}\text{d}x \sim
\int_{B(bte,rt)} \frac{l}{|x|^{d-1}}\text{d}x=l g_d(rt,bt). \label{eq2}
\end{equation}
Combining (\ref{eq1}) and (\ref{eq2}) yields $\underset{t\rightarrow \infty}{\lim} \frac{\nu(B(bte,rt))}{l t g_d(r,b)}=1$, which implies the result.
\end{proof}

\begin{lemma}[Overproduction] \label{overprod}
Let $\delta>0$. Then for any $t\geq 0$
\begin{equation} P(|Z(t)|>e^{(\beta m+\delta)t})\leq e^{-\delta t}. \nonumber
\end{equation}
\end{lemma}

\begin{proof} Use the fact that $E|Z(t)|=e^{m\beta t}$ (see \cite{KT1975}) and then apply Markov inequality.
\end{proof}

\begin{lemma}[Comparison] \label{comparison}
Suppose that $\lambda(0)=0$. Let $B\subset \mathbb{R}^d$ be open or closed. Let $x\in B$. Let $P_x$ be the law of Brownian motion, denoted by $W$, starting from $x$, and let $P_{\delta_x}$ be the law of BBM starting from $\delta_x$. Define the first exit times from $B$
\begin{align} \psi_B=&\inf\left\{t\geq 0:W(t)\in B^c\right\} \nonumber \\ \hat{\psi}_B=&\inf\left\{t\geq 0:|\text{supp}(Z(t))\cap B^c| \geq 1 \right\}. \nonumber
\end{align}
Then for any $k\in \mathbb{N}$ and $t\geq 0$
\begin{equation} P_{\delta_x}(\hat{\psi}_B>t||Z(t)|\leq k)\geq [P_x(\psi_B>t)]^k. \nonumber
\end{equation}
\end{lemma}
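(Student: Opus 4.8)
The plan is to exploit the genealogical structure of the BBM to reduce the claim to a single inequality for a family of Brownian trajectories that share common ancestral segments, and then to prove that inequality by induction on the number of particles, the inductive step being powered by a convexity (Jensen) estimate applied at the first branching event. Throughout I use that $\lambda(0)=0$ means the population never dies out, so $|Z(t)|\ge 1$ for all $t$ and every particle alive before time $t$ has a descendant alive at time $t$.

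First I would dispose of the conditioning. Recall that $\{\hat\psi_B>t\}=\{R(t)\subseteq B\}$ and $\{\psi_B>t\}=\{W([0,t])\subseteq B\}$ (exactly when $B$ is open, and up to a $P_x$-null event otherwise), and write $q_B(s,y):=P_y(W([0,s])\subseteq B)\in[0,1]$ and $q:=q_B(t,x)=P_x(\psi_B>t)$. It then suffices to prove that for every $n\ge1$,
\[ P_{\delta_x}\bigl(\hat\psi_B>t \bigm| |Z(t)|=n\bigr)\ \ge\ q^{\,n}. \]
Indeed, summing over $n\le k$ and using $q^{\,n}\ge q^{\,k}$ for $n\le k$ gives $P_{\delta_x}(\hat\psi_B>t,\,|Z(t)|\le k)\ge q^{\,k}\,P(|Z(t)|\le k)$, which is the asserted bound after dividing by $P(|Z(t)|\le k)$ (which is positive for $k\ge1$).

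To prove the display I would condition on the genealogical tree $\mathcal T$ of the BBM — its combinatorial shape together with all particle lifetimes. This determines $|Z(t)|=:n$, and given $\mathcal T$ the spatial displacements along the edges are independent Brownian pieces issued from $x$; moreover $R(t)=\bigcup_{i=1}^{n}\pi_i([0,t])$, where $\pi_1,\dots,\pi_n\colon[0,t]\to\mathbb R^d$ are the $n$ root-to-leaf trajectories. So it is enough to show, for every $s>0$, every starting point, and every tree with $n$ leaves at time $s$, that $P\bigl(\pi_i([0,s])\subseteq B\ \forall i\bigm|\mathcal T\bigr)\ge q_B(s,\cdot)^{\,n}$; I would do this by induction on $n$. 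For $n=1$ the left side equals $q_B(s,\cdot)$ exactly. For $n\ge2$, let $\sigma\in(0,s)$ be the ($\mathcal T$-measurable) time of the first branch; up to $\sigma$ the ancestral trajectory is a single Brownian motion $W$ from $x$, and at $\sigma$ it splits into $j\ge2$ children whose subtrees carry $n_1,\dots,n_j\ge1$ leaves with $\sum_i n_i=n$, hence each $n_i\le n-1$. By the branching property, conditionally on $\mathcal T$ and $W|_{[0,\sigma]}$ the $j$ subtree-families evolve independently from $W(\sigma)$, so
\[ P\bigl(\pi_i\subseteq B\ \forall i\bigm|\mathcal T,\,W|_{[0,\sigma]}\bigr)=\mathbf{1}\{W([0,\sigma])\subseteq B\}\prod_{i=1}^{j}P\bigl(\text{subtree }i\text{ stays in }B\bigm|\mathcal T_i,\,W(\sigma)\bigr); \]
bounding the $i$-th factor below by $q_B(s-\sigma,W(\sigma))^{\,n_i}$ via the induction hypothesis and taking expectation over $W|_{[0,\sigma]}$ gives
\[ P\bigl(\pi_i\subseteq B\ \forall i\bigm|\mathcal T\bigr)\ \ge\ E_x\!\left[\mathbf{1}\{W([0,\sigma])\subseteq B\}\,q_B(s-\sigma,W(\sigma))^{\,n}\right]. \]
I would then close the induction with Jensen's inequality: let $\rho$ be the law of $W|_{[0,\sigma]}$ restricted to $\{W([0,\sigma])\subseteq B\}$, a sub-probability measure of total mass $p=q_B(\sigma,x)\le1$, and put $Y:=q_B(s-\sigma,W(\sigma))\in[0,1]$; since $y\mapsto y^{\,n}$ is convex, $E_\rho[Y^{\,n}]=p\,E_{\rho/p}[Y^{\,n}]\ge p\,(E_{\rho/p}[Y])^{\,n}=p^{\,1-n}(E_\rho[Y])^{\,n}\ge(E_\rho[Y])^{\,n}$, while the Markov property for $W$ gives $E_\rho[Y]=E_x[\mathbf{1}\{W([0,\sigma])\subseteq B\}\,q_B(s-\sigma,W(\sigma))]=q_B(s,x)$. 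Hence the left side is $\ge q_B(s,x)^{\,n}$, completing the induction and, with $s=t$, the proof.

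The main obstacle is this last step, which is where the statement really ``happens'': the shared ancestral segments make the leaf-trajectories positively correlated, and the quantitative handle is precisely to apply Jensen to the conditional survival probability read off at the first branch point, and then to set up the induction on the particle count so that this local gain propagates up the entire tree. The remaining ingredients — the genealogical representation of $R(t)$ and the passage from $\{|Z(t)|=n\}$ to $\{|Z(t)|\le k\}$ — are routine bookkeeping; the only point worth flagging there is that conditioning on $|Z(t)|\le k$, rather than on the exact value, is exactly what lets the variable exponent $n$ be replaced by the single exponent $k$.
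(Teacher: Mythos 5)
Your proof is correct and is, in substance, the argument the paper delegates to \cite{E2003}: condition on the genealogical tree, use $\lambda(0)=0$ to see that the range is covered by the at most $k$ root-to-leaf trajectories of the time-$t$ particles (exactly the point the paper's remark about the population never decreasing is making), and propagate the single-particle bound through the branch points by induction, with Jensen's inequality for $y\mapsto y^{n}$ and the Markov property recombining the two time segments. The only small caveat is that your assertion $j\geq 2$ at the first branch time relies on the paper's standing convention $\lambda(1)=0$ (otherwise induct on the number of branching events rather than on $n$, since a single-child branch point leaves both $n$ and the spatial law unchanged); with that noted, and the trivial case $q_B(\sigma,y)=0$ handled separately, the argument is complete.
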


\begin{proof} See the corresponding proof in \cite{E2003}, which is written for strictly dyadic branching, and note that the proof can easily be extended to the case of general $\lambda$-GWP with $\lambda(0)=0$, since the population does not decrease for such processes, that is, for any $t\geq 0$, $|Z(s)|\leq |Z(t)|$ for all $s\in[0,t]$. 
\end{proof}

\begin{lemma}[Decomposition of the supercritical GWP] \label{decomp}
Let $N=(N(n))_{n\in \mathbb{N}}$ be a supercritical GWP with offspring distribution $\lambda$, where $\lambda(0)>0$, and let $f$ be the corresponding p.g.f. Let $q$ be the probability of extinction for $N$, and define $\bar{q}=1-q$. Let $N^*=(N^*(n))_{n\in \mathbb{N}}$ be the process, where $N^*(n)$ is the number of particles in generation $n$ that have an infinite line of descent.
\begin{enumerate}
	\item The law of $N$ given extinction is the same as that of a GWP with p.g.f.
	\begin{equation} \tilde{f}(s):=f(qs)/q. \label{ftilde}
	\end{equation}
	\item The law of $N^*$ given non-extinction is the same as that of a GWP with p.g.f.
	\begin{equation} f^*(s):=[f(q+\bar{q}s)-q]/\bar{q}. \label{fstar2}
	\end{equation}
	\item The law of $N$ given non-extinction is the same as that of a process $\xbar{N}$ generated as follows: To each particle of $N^*$ having $n_0$ children, add $n_1$ more children, that each start an independent GWP with p.g.f. $\tilde{f}$, where $n_1$ has the p.g.f.
	\begin{equation} f_{n_1}(s):=\frac{(D^{n_0}f)(qs)}{(D^{n_0}f)(q)},
	\end{equation}
	where $D$ is the differentiation operator, and all $n_1$ and all GWPs added are mutually independent given $N^*$. The resultant process is $\xbar{N}$.
	\item Let $X=(X(t))_{t\in \mathbb{R}^+}$ be a continuous time branching process with rate $\beta$ such that the number of particles in generation $n$ of $X$ is $N(n)$ for $n\in \mathbb{N}$. Let $X^*(t)$ be the number of particles with infinite line of descent present at time $t$. Then, the process $X^*=(X^*(t))_{t\in \mathbb{R}^+}$ given non-extinction is equal in law to a continuous time branching process with rate $\beta(1-f'(q))$ and offspring distribution, whose p.g.f. is
	\begin{equation} \bar{f}:=[f^*(s)-f'(q)s]/(1-f'(q)).  \label{fbar}
	\end{equation}
\end{enumerate}
\end{lemma}

\begin{remark}
The reader should note that in (\ref{fstar2}), positive mass on $1$ is allowed. For example, if $\lambda(0)=1/3$, $\lambda(2)=2/3$, $f(s)=1/3+(2/3)s^2$, then $q=1/2=\bar{q}$, and $f'(q)=2/3$, while $f^*(s)=(2/3)s+(1/3)s^2$. This distribution still has the same mean as originally (i.e., $4/3$), but here we put $2/3$ weight on $1$.

What (\ref{fbar}) says is that, equivalently (without putting mass on $1$), it has rate $\beta/3$ and p.g.f. $\bar{f}=s^2$.
\end{remark}

\begin{proof} See \cite[Proposition 4.10]{L1992} for a proof of parts 1, 2 and 3, and \cite[Section 1.12]{AN1972} for further details on the decomposition of supercritical GWPs. For a proof of part 4, observe that a continuous time branching process with rate $\beta$ and offspring distribution $\lambda$ is equal in law to a continuous time branching process with rate $\beta(1-\lambda(1))$ and offspring distribution $\bar{\lambda}$, where $\bar{\lambda}(1)=0$ and $\bar{\lambda}(j)=\lambda(j)/(1-\lambda(1))$ for $j\neq 1$. To complete the proof, see the proof of part 3 of \cite[Theorem 1]{OC2013}.
\end{proof}

We note that the particles of infinite line of descent, which we refer to as the `skeleton particles' in this work, have been referred to as `prolific individuals' in the past in \cite{BFM2008}. Furthermore, the term `backbone' has also been used for the analogue of the term `skeleton' in the context of continuous-state branching processes and superprocesses, see e.g. \cite{BKM2011,KMP2012,KR2012}.

\begin{lemma}[Brownian hitting times in $\mathbb{R}^d$] \label{hittingtimes}
Let $B=(B_t)_{t\geq 0}$ be a Brownian motion in $\mathbb{R}^d$ starting at the origin, with corresponding probability $P^{(d)}_0$, and let $k>0$. Then, as $t\rightarrow \infty$,
\begin{equation} P^{(d)}_0\left(\underset{0\leq s \leq t}{\sup}|B_s|\geq kt\right)=\exp\left\{-\frac{k^2 t}{2}(1+o(1))\right\}. \label{eq3}
\end{equation}
Furthermore, for $d=1$, even the following stronger statement is true. Let $m_t=\inf_{0\leq s \leq t}B_s$ and $M_t=\sup_{0\leq s \leq t}B_s$. (The process $M_t-m_t$ is the \text{range process} of $B$.) Then, as $t\rightarrow \infty$,
\begin{equation} P^{(1)}_0(M_t-m_t\geq kt)=\exp\left\{-\frac{k^2 t}{2}(1+o(1))\right\}. \label{eq4}
\end{equation}
\end{lemma}

\begin{proof} For $d=1$, recall that \cite[equation 7.3.3]{KT1975} for any $a>0$,
\begin{equation} P^{(1)}_0\left(\underset{0\leq s \leq t}{\sup}|B_s|\geq a\right)=\sqrt{\frac{2}{\pi}}\int^{\infty}_{a/\sqrt{t}}\exp(-y^2/2)\text{d}y.  \nonumber
\end{equation}
Setting $a=kt$ above, we see via l'H\^{o}pital's rule that
\begin{equation} P^{(1)}_0\left(\underset{0\leq s \leq t}{\sup}|B_s|\geq kt\right)=\exp\left\{-\frac{k^2 t}{2}(1+o(1))\right\}. \nonumber
\end{equation}
We now extend this result to $d\geq 2$. Note that the lower bound holds trivially since the projection of a d-dimensional Brownian motion onto the first coordinate axis is a one-dimensional Brownian motion. In order to prove the upper bound, we first prove (\ref{eq4}).

Let $d=1$. Define
\[\theta_c=\inf\left\{s\geq 0|M_s-m_s=c\right\}.
\]
According to \cite[p. 199]{CY2003} and references therein, the Laplace transform of $\theta_c$ satisfies
\begin{equation} E\exp\left(-\frac{\lambda^2}{2}\theta_c\right)=\frac{2}{1+\cosh(\lambda c)},\quad \lambda>0. \nonumber
\end{equation}
Hence, by exponential Markov inequality,
\begin{equation} P^{(1)}_0(\theta_c \leq t)\leq \exp\left(\frac{\lambda^2}{2}t\right)E\exp\left(-\frac{\lambda^2}{2}\theta_c\right)=\exp\left(\frac{\lambda^2}{2}t\right)\frac{2}{1+\cosh(\lambda c)}. \nonumber
\end{equation}
Taking $c=kt$, one obtains
\begin{equation} P^{(1)}_0(M_t-m_t\geq kt)=P^{(1)}_0(\theta_{kt} \leq t)\leq \exp\left(\frac{\lambda^2}{2}t\right)\frac{2}{1+\cosh(\lambda kt)}. \nonumber
\end{equation}
Optimizing over $\lambda$, we see that the estimate is the sharpest when $\lambda=k$, in which case, we arrive at the desired upper bound. Since 
\[ P^{(1)}_0\left(\sup_{0\leq s \leq t}|B_s|\geq kt\right)\leq P^{(1)}_0(M_t-m_t\geq kt), \] 
the lower bound coincides with the upper bound, which implies (\ref{eq4}).

For $d\geq 2$, let us consider more generally the Brownian motion started at $x$, denoted by $B^x$, $x\in\mathbb{R}^d$, with law $P^{(d)}_x$, and let $r=|x|$. We will denote $P^{(d)}_0$ simply by $P^{(d)}$, that is, $P^{(d)}$ denotes $d$-dimensional Wiener measure. Then the process $R^r=(R^r_t)_{t\geq 0}$ with state space $[0,\infty)$, defined by $R^r=|B^x|$  is called the \textit{$d$-dimensional Bessel process}, provided  $r>0$. We will use the well known fact \cite[Example 8.4.1]{O2003} that the process 
\begin{equation} R^r_t-r-\int_0^t \frac{d-1}{2R^r_s}\text{d}s \label{bessel}
\end{equation}
is a standard one-dimensional Brownian motion for each $r>0$; its law is thus $P^{(1)}$. Since $R^r$ is the strong solution to a stochastic differential equation (one gets an $\omega$-wise solution by making the expression in \eqref{bessel} equal to a standard Brownian motion), the probability distribution $P^{(1)}(R^r\in \cdot)$ is well defined.

Using the strong Markov property of Brownian motion, applied at $\tau$, the first hitting time of the $\rho$-sphere by $B$, it is clear that in order to verify the lemma, it suffices to prove it when $B$ is replaced by $B^x$, with $|x|=\rho>0$. This follows, since for every fixed $\rho$ such that $0<\rho<kt$, we have
\begin{equation}  P^{(d)}\left(\underset{0\leq s \leq t}{\sup}|B_s|\geq kt\right)\leq  P^{(d)}\left(\underset{0\leq s \leq t+\tau}{\sup}|B_s|\geq kt\right)=P^{(1)}\left(\underset{0\leq s \leq t}{\sup}R^{\rho}_s\geq kt\right). \label{eq5}
\end{equation} 

Define the sequence of stopping times $0=:\tau_0<\sigma_0<\tau_1<\sigma_1<\cdots$ with respect to the filtration generated by $R^{\rho}$ as follows:
\begin{equation} \quad \sigma_0:=\inf\left\{s>0:R^{\rho}_s=\rho/2\right\}, \nonumber
\end{equation}
and for $i\geq 1$,
\begin{equation} \tau_{i+1}:=\inf\left\{s>\sigma_i:R^{\rho}_s=\rho\right\}, \quad \sigma_{i+1}:=\inf\left\{s>\tau_{i+1}:R^{\rho}_s=\rho/2\right\}. \nonumber
\end{equation}
Let $W^{(\rho)}_t:=R^{\rho}_t-\rho-\int_0^t \frac{d-1}{2R^{\rho}_{s}}\text{d}s$  (note that $W^{(\rho)}_0:=0$). Then, for $i\geq 0$ and $s\in [\tau_i,\sigma_i]$,
\begin{equation} R^{\rho}_s=\rho+\int_{\tau_i}^s \frac{d-1}{2R^{\rho}_z}\text{d}z+W^{(\rho)}_s-W^{(\rho)}_{\tau_i}\leq \rho+\rho^{-1}(d-1)\Delta_i^s+W^{(\rho)}_s-W^{(\rho)}_{\tau_i}, \label{eq6}
\end{equation}
where  $\Delta_i^s:=s-\tau_i$. 

Since $R_s\leq \rho$ for $\sigma_i \leq s \leq \tau_{i+1}$ and $i\geq 0$, it is also clear that for $t>\rho/k$, the relation $\sup_{0\leq s \leq t}R^{\rho}_s\geq kt$ is tantamount to
\begin{equation} \underset{i\geq 0}{\sup} \quad \underset{\tau_i\wedge t<s\leq \sigma_i\wedge t}{\sup}R^{\rho}_s\geq kt.  \nonumber
\end{equation}
Putting this together with (\ref{eq6}), it follows that $P^{(1)}(\sup_{0\leq s \leq t}R^{\rho}_s \geq kt)$ can be estimated from above by 
\begin{equation} P^{(1)}\left(\exists \ i \geq 0, \exists \ \tau_i \wedge t<s\leq \sigma_i \wedge t:W^{(\rho)}_s-W^{(\rho)}_{\tau_i}\geq kt-\rho^{-1}(d-1)\Delta_i^s-\rho \right). \nonumber
\end{equation}
This can be further estimated from above by
\begin{equation} P^{(1)}\left(M_t-m_t\geq \left[k-\frac{d-1}{\rho}-\delta\right]t \right), \nonumber
\end{equation} 
for any $\delta>0$, as long as $t\geq \rho/\delta$. To complete the proof, fix $\rho$, $\delta>0$, let $t\rightarrow \infty$, and use the already proved relation (\ref{eq4}), along with (\ref{eq5}). Finally let $\delta\rightarrow 0$ and $\rho\rightarrow \infty$.

\end{proof}

We note that precise estimates for the tail of $P^{(d)}_0\left(\underset{0\leq s \leq t}{\sup}|B_s|\geq kt\right)$ in Lemma~\ref{hittingtimes} are given in \cite[Theorem 2.1]{G1996}. Nonetheless, the estimate (\ref{eq3}), for which we have provided a self-contained proof, suffices for our purposes.

\section{The speed of BBM}
In this section, we extend the well-known convergence in probability result of McKean \cite{MK1975,MK1976} regarding the speed of BBM, which holds when the dimension is one, and when each particle is certain to give at least one offspring, i.e., $\lambda(0)=0$. We first extend McKean's result to any dimension, and finally to the case where $\lambda(0)>0$. The results of this section are central in proving Theorem~\ref{teo2}.

Recall from (\ref{range}) that $\cup_{s\in[0,t]} \text{supp}(Z(s))$ denotes the range of $Z$ up to time $t$, and define
\begin{equation} M(t)=\inf \left\{r>0:R(t)\subseteq B(0,r)\right\} \quad \text{for} \quad d\geq 1, \label{M}
\end{equation}
to be the radius of the minimal ball containing $R(t)$. We define the speed at time $t$ of a BBM as $M(t)/t$. Note that this is slightly different from the classical notion of speed (see for example \cite{MK1975,CR1988,K2005}), which is $X(t)/t$, where $X(t):=\inf \left\{r> 0:\text{supp}(Z(t))\subseteq B(0,r)\right\}$ is the radius of the minimal ball containing the support of the BBM at time $t$ only. Although the two definitions of speed above are different, it is easy to prove that they should have the same asymptotics as $t\rightarrow\infty$.

\begin{proposition}[Speed of BBM] \label{prop1}
Suppose that $\lambda(0)=0$.
For $d\geq 1$, $\frac{M(t)}{t}$ converges to $\sqrt{2\beta m}$ in $P$-probability as $t\rightarrow \infty$.
\end{proposition}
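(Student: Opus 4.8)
The plan is to establish separately the two one-sided bounds that together give convergence in probability: for every $\epsilon>0$, both $P(M(t)/t<(1-\epsilon)\sqrt{2\beta m})\to 0$ (the lower bound on the speed) and $P(M(t)/t>(1+\epsilon)\sqrt{2\beta m})\to 0$ (the upper bound). Note that $\lambda(0)=0$ together with $\lambda(1)=0$ forces $\mu\ge 2$, so $m\ge 1>0$ and $\sqrt{2\beta m}$ is well defined. The lower bound will come essentially for free from the one-dimensional speed result recalled at the start of this section, whereas the upper bound genuinely uses the $\mathbb{R}^d$ structure and is where the work lies.

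For the lower bound I would project the BBM onto the first coordinate axis. Since the spatial displacements are independent of the genealogy, the first coordinates of the particles form a one-dimensional BBM with the same branching rate $\beta$ and the same offspring law $\lambda$; let $X^*_1(t)=\max_{u\ \text{alive at}\ t}X_u^{(1)}(t)$ be the position of its rightmost particle. Since $X_u(t)\in R(t)$, one has $M(t)\ge\sup_{y\in R(t)}y^{(1)}\ge X^*_1(t)$, hence $P(M(t)/t<(1-\epsilon)\sqrt{2\beta m})\le P(X^*_1(t)/t<(1-\epsilon)\sqrt{2\beta m})$, and the right-hand side tends to $0$ by the known one-dimensional result (\cite{MK1975,MK1976}), which applies since $\lambda(0)=0$.

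For the upper bound, fix $R=(1+\epsilon)\sqrt{2\beta m}$, note $R^2/2-\beta m=\bigl((1+\epsilon)^2-1\bigr)\beta m>0$, and pick $\delta\in\bigl(0,\tfrac{R^2}{2}-\beta m\bigr)$ and $k=k(t):=e^{(\beta m+\delta)t}$. With $B:=B_{Rt}(0)$ one has $\{M(t)>Rt\}\subseteq\{\hat{\psi}_B\le t\}$ in the notation of Lemma~\ref{comparison}, since the range reaching beyond radius $Rt$ forces some particle to lie outside $B$ at some time $s\le t$. Split on the size of $|Z(t)|$: by Lemma~\ref{overprod}, $P(|Z(t)|>k)$ decays exponentially in $t$; and for the complementary term, Lemma~\ref{comparison} (valid because $\lambda(0)=0$) gives $P_{\delta_0}(\hat{\psi}_B>t\mid|Z(t)|\le k)\ge[P_0(\psi_B>t)]^{k}$, hence $P(\hat{\psi}_B\le t,\,|Z(t)|\le k)\le 1-[P_0(\psi_B>t)]^{k}$. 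Now $1-P_0(\psi_B>t)=P_0(\sup_{0\le s\le t}|B_s|\ge Rt)=\exp\{-\tfrac{R^2 t}{2}(1+o(1))\}$ by (\ref{eq3}), so $k\,[1-P_0(\psi_B>t)]=\exp\{(\beta m+\delta)t-\tfrac{R^2 t}{2}(1+o(1))\}\to 0$ by the choice of $\delta$, and therefore $[P_0(\psi_B>t)]^{k}\to 1$. Combining the two terms yields $P(M(t)>Rt)\to 0$.

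The main obstacle is this upper bound, and within it the honest handling of the conditioning on $\{|Z(t)|\le k\}$: one must place the population cutoff $k(t)$ strictly between the growth rate $e^{\beta m t}$ of $E|Z(t)|$ and the Brownian escape rate $e^{R^2 t/2}$, which is precisely why the \emph{sharp} exponential asymptotics of Lemma~\ref{hittingtimes}, rather than a crude tail bound, are needed, and why $R>\sqrt{2\beta m}$ is exactly the threshold that makes $P(|Z(t)|>k)\to 0$ and $k[1-P_0(\psi_B>t)]\to 0$ hold simultaneously. Some routine care is also needed with the open/closed-ball and ``$>$ versus $\ge$'' distinctions relating $M(t)$ to $\hat{\psi}_B$; these do not affect the limit. (A comparison-free route to the upper bound is a first-moment estimate: since $\lambda(0)=0$, $R(t)$ is the union of the ancestral paths of the particles alive at time $t$, so $P(M(t)>Rt)\le E|Z(t)|\,P_0(\sup_{s\le t}|B_s|\ge Rt)=e^{\beta m t}e^{-R^2 t/2(1+o(1))}\to 0$; I would nonetheless present the comparison version, which is the one fitting the lemmas assembled above.)
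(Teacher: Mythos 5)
Your proposal is correct and takes essentially the same route as the paper's proof: the lower bound by projecting onto the first coordinate axis and invoking the known one-dimensional speed result, and the upper bound by splitting on the event $\{|Z(t)|\le e^{(\beta m+\delta)t}\}$ and combining Lemma~\ref{overprod}, Lemma~\ref{comparison} and Lemma~\ref{hittingtimes}, with $\delta$ chosen so that $\beta m+\delta<\tfrac{1}{2}(\sqrt{2\beta m}+\varepsilon)^2$. The only cosmetic differences are that you treat $d=1$ by the same argument rather than citing it separately and you sketch an optional first-moment alternative; neither changes the substance.
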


\begin{remark}
It is easy to see that the quantity $\beta m$ is invariant under changing the setting from the `canonical one' (that is, putting zero mass on $1$) to a non-canonical one (that is, assigning positive mass to $1$).
\end{remark}

\begin{proof} For $d=1$, the reader is referred to \cite{MK1975,MK1976} and \cite{CR1988}. Now let $d\geq 2$. Since the projection of $Z$ onto the $1$st coordinate axis is a one-dimensional BBM with branching rate $\beta$, the lower estimate for convergence in probability follows from the result for $d=1$ and the inequality

\begin{equation} P\left(\frac{M(t)}{t}>\sqrt{2\beta m}-\varepsilon\right)\geq P^*\left(\frac{M(t)}{t}>\sqrt{2\beta m}-\varepsilon\right)   \nonumber
\end{equation}
$\forall \: \varepsilon>0$ and $\forall \: t$, where $P^*$ denotes the law of the one-dimensional projection of $Z$.

To prove the upper estimate for convergence in probability, let $\varepsilon>0$ and let $B=B(0,(\sqrt{2\beta m}+\varepsilon)t)$. Pick $\delta>0$ such that

\begin{equation} \frac{1}{2}(\sqrt{2\beta m}+\varepsilon)^2>\beta m+\delta. \label{eq7}
\end{equation}
Recall that $\psi_B$ and $\hat{\psi}_B$ are the first exit times from $B$ for a single Brownian particle starting at the origin and for a BBM, respectively. See that these events are identical: $\left\{\frac{M(t)}{t}>\sqrt{2\beta m}+\varepsilon\right\}=\left\{\hat{\psi}_B\leq t\right\}$.
Estimate

\begin{align} P\left(\frac{M(t)}{t}>\sqrt{2\beta m}+\varepsilon\right)\leq & \ P(|Z(t)|>\lfloor e^{(\beta m+\delta)t} \rfloor) \nonumber \\ &+P(\hat{\psi}_B\leq t|\  |Z(t)|\leq \lfloor e^{(\beta m+\delta)t} \rfloor). \label{eq8}
\end{align}
By Lemma~\ref{overprod}, the first term on the right-hand side tends to zero exponentially fast. Now consider the second term. Recall that $P_x$ is the law of Brownian motion starting from $x$, and $P_{\delta_x}$ is the law of BBM starting from $\delta_x$. By Lemma~\ref{comparison}, we have the estimate

\begin{equation} P_{\delta_0}(\hat{\psi}_B> t||Z(t)|\leq \lfloor e^{(\beta m+\delta)t} \rfloor)\geq [P_0(\psi_B>t)]^{\lfloor e^{(\beta m+\delta)t} \rfloor} . \nonumber
\end{equation}
By Lemma~\ref{hittingtimes},

\begin{equation} [P_0(\psi_B>t)]^{\lfloor e^{(\beta m+\delta)t} \rfloor}=\left[1-\exp\left(-\frac{[(\sqrt{2\beta m}+\varepsilon)t]^2}{2t}\right)\exp(o(t))\right]^{\lfloor e^{(\beta m+\delta)t} \rfloor}. \label{eq9}
\end{equation}
By (\ref{eq7}), using the binomial expansion, the right-hand side of (\ref{eq9}) tends to $1$ exponentially fast as $t\rightarrow \infty$, so that (\ref{eq8}) yields the desired upper estimate

\begin{equation} P\left(\frac{M(t)}{t}>\sqrt{2\beta m}+\varepsilon\right)\rightarrow 0, \nonumber
\end{equation}
which completes the proof.
\end{proof}
{\bf Note:} For the upper estimate, one can give another, equally simple proof, by exploiting the so-called Many-to-One Lemma (Lemma 1.1.6 in \cite{E2015}).

\medskip
Next, we turn our attention to the speed, when conditioned on non-extinction.

\begin{theorem}[Conditional speed of BBM] \label{teo1}
Suppose that the underlying GWP is supercritical, i.e., $m>0$. Then, for $d\geq 1$, conditioned on non-extinction, $\frac{M(t)}{t}$ converges to $\sqrt{2\beta m}$ in $P$-probability as $t\rightarrow \infty$.
\end{theorem}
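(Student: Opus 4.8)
My plan is to establish two matching bounds for each fixed $\varepsilon>0$: an unconditional upper bound $P\big(M(t)>(1+\varepsilon)\sqrt{2\beta m}\,t\big)\to 0$ and a conditional lower bound $P\big(M(t)<(1-\varepsilon)\sqrt{2\beta m}\,t\mid\mathcal{E}^c\big)\to 0$. Since $m>0$ forces $P(\mathcal{E}^c)>0$, the first bound automatically gives $P\big(M(t)>(1+\varepsilon)\sqrt{2\beta m}\,t\mid\mathcal{E}^c\big)\to0$ as well, so only the lower bound genuinely uses the conditioning, and it is the substantial half.

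For the upper bound I would use only the first moment. By the many-to-one formula $E|Z(s)|=e^{\beta m s}$, so the expected number of particles alive at a fixed time $s\le t$ and lying outside $B_r(0)$ is $e^{\beta m s}\,P(|W_s|\ge r)$ for a standard $d$-dimensional Brownian motion $W$, and the radial Gaussian tail makes this negligible once $r^2/(2t)>\beta m t$. To pass from a fixed time to the whole range $R(t)$, I would discretize $[0,t]$ into $O(t/\delta_t)$ windows along a slowly vanishing mesh $\delta_t$ and add the probability that some particle --- of which, since $\mu<\infty$, there are only finitely many in expectation up to time $t$ --- displaces by more than a fixed amount inside a single window, which the modulus-of-continuity estimate for Brownian motion controls. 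Taking $r=(1+\varepsilon)\sqrt{2\beta m}\,t$ then yields $P(M(t)\ge r)\to 0$. In the skeleton picture this is exactly the comparison that the doomed sub-populations cannot enlarge the minimal ball past radius $(1+\varepsilon)\sqrt{2\beta m}\,t$; the subtle point there is that the exponentially many doomed subtrees must be estimated collectively against this one ambient first moment rather than tree by tree, since each subcritical subtree spreads too readily to survive a crude union bound.

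For the lower bound I would pass to the skeleton decomposition of Lemma~\ref{decomp}. Conditioned on $\mathcal{E}^c$, the skeleton particles (those with an infinite line of descent) form a genuine branching Brownian motion $\hat Z$ with branching rate $\beta$ that never produces $0$ offspring and whose offspring mean is again $\mu$, so $E|\hat Z(s)|=e^{\beta m s}$; since $R_{\hat Z}(t)\subseteq R(t)$ on $\mathcal{E}^c$ it suffices to bound $M_{\hat Z}(t)$ from below, and for $\hat Z$ no conditioning remains. As the only hypothesis is $\mu<\infty$, a direct second-moment argument on $\hat Z$ is unavailable, so I would truncate: let $\hat Z^{(K)}$ be the BBM obtained from $\hat Z$ by discarding, at every branching event, all offspring beyond the first $K$ together with their descendants. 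For $K$ large this is again a never-extinct supercritical BBM with rate $\beta$, offspring mean $\mu_K\uparrow\mu$ and bounded (hence all-moments-finite) offspring, and $M_{\hat Z^{(K)}}(t)\le M_{\hat Z}(t)$. To $\hat Z^{(K)}$ I would apply the Paley--Zygmund inequality to $N_t:=\#\{u\text{ alive at }t:\ \langle X_u(t),e\rangle\ge(1-\tfrac{\varepsilon}{2})\sqrt{2\beta(\mu_K-1)}\,t\}$: the many-to-one formula gives $E N_t\to\infty$, while the bounded branching keeps the pair-correlation term under control and yields $E[N_t^2]\le C_K(E N_t)^2$, so $P(N_t>0)\ge c_K>0$ for all large $t$. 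To turn this positive probability into a high one, I would run $\hat Z$ to a fixed time $s_0$; since $|\hat Z(s_0)|\to\infty$ as $s_0\to\infty$ one may assume there are at least $L$ particles, each launching an independent copy of $\hat Z$ from a site whose norm does not grow with $t$, so the triangle inequality and the previous step bound by $(1-c_K)^L$ the probability that none of those $L$ subtrees reaches distance $(1-\varepsilon)\sqrt{2\beta m}\,t$. Letting first $L\to\infty$ and then $K\to\infty$, so that $\mu_K-1\to m$, gives $P\big(M_{\hat Z}(t)<(1-\varepsilon)\sqrt{2\beta m}\,t\mid\mathcal{E}^c\big)\to 0$, which together with $M(t)\ge M_{\hat Z}(t)$ on $\mathcal{E}^c$ is the desired lower bound.

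The hard part is the lower bound, where two difficulties interact: the weak hypothesis $\mu<\infty$, circumvented by the truncation --- which is precisely what keeps the second-moment pair-correlation term bounded --- and the conditioning on non-extinction, handled by the skeleton decomposition. The accompanying comparison that the doomed particles contribute nothing to the speed is trivial for the lower bound but, as indicated, delicate for the upper bound, and it is the reason the statement does not follow from standard branching-Brownian-motion speed results off the shelf.
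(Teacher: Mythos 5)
The architecture (skeleton decomposition for the conditional lower bound, an unconditional bound for the upper bound) is sound, but the key step of your lower bound fails as stated. The Paley--Zygmund argument applied to $N_t$, the number of particles of the truncated skeleton BBM beyond the level $a t$ with $a=(1-\tfrac{\varepsilon}{2})\sqrt{2\beta m_K}$, $m_K=\mu_K-1$, does not give $E[N_t^2]\leq C_K (EN_t)^2$: the obstruction is spatial pair correlation, not offspring variance, so truncating the offspring at $K$ does not help. By the many-to-two formula, the contribution to $E[N_t^2]$ from pairs whose common ancestor splits at time $s=ut$ exceeds $(EN_t)^2$ by a factor of order $\exp\{tu(\frac{a^2}{1+u}-\beta m_K)\}$ (the optimal strategy is for the ancestor to drift atypically far forward before splitting), which is exponentially large for small $u$ whenever $a^2>\beta m_K$. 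Equivalently, the additive martingale at parameter $a$ is $L^2$-bounded only for $a<\sqrt{\beta m_K}$, a factor $\sqrt{2}$ short of the speed $\sqrt{2\beta m_K}$. Since your $a$ is just below the speed, $E[N_t^2]/(EN_t)^2\to\infty$ exponentially and Paley--Zygmund yields a vanishing, not positive, lower bound; the subsequent boosting over $L$ subtrees therefore has nothing to boost. The standard repair is to count only particles whose entire ancestral path stays in a linear tube (or below a linear barrier) around the ray of slope $a$, which restores comparability of the two moments for every $a$ strictly below the speed; alternatively one can avoid the second-moment computation altogether by invoking the known one-dimensional speed result for a BBM with $\lambda(0)=0$ and projecting, which is exactly what the paper does: it applies Proposition~\ref{prop1} (whose $d=1$ lower bound is quoted from McKean and Chauvin--Rouault) to the skeleton BBM of Lemma~\ref{decomp}, for which $f^*(0)=0$ and $(f^*)'(1)-1=m$, making the conditional lower bound immediate.

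Your upper bound is a genuinely different and legitimate route from the paper's. The paper cannot simply reuse its Proposition~\ref{prop1} when $\lambda(0)>0$ (its proof rests on the non-decreasing-population comparison, Lemma~\ref{comparison}), so it controls the doomed particles by replacing their offspring law $\tilde f$ with $f^*$ in a modified two-type process and comparing via Lemma~\ref{pgfcomparison} and Lemma~\ref{exitprob}; your unconditional first-moment bound on the range sidesteps all of that, since $E|Z(s)|=e^{\beta m s}$ counts skeleton and doomed particles together. Only calibrate the discretization correctly: with a fixed displacement threshold and a slowly vanishing mesh, the union bound over the roughly $e^{\beta m t}$ ancestral lines in a window is not summable; take the within-window displacement threshold proportional to $\varepsilon t$ (then a fixed mesh suffices, the per-line cost being $e^{-c\varepsilon^2 t^2}$), or shrink the mesh faster than $1/t$.
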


Regarding the speed of BBM, we refer the reader to \cite{MK1975,MK1976} for the case $d=1$, and where $\lambda(0)=0$. We note that almost sure speed results exist too (see e.g. \cite{K2005}), but for our purposes, convergence in probability suffices. For $d\geq 2$ and $\lambda(0)=0$, we have derived the speed in Proposition~\ref{prop1} above. Theorem~\ref{teo1}, which will be proved below, covers the general case where $\lambda(0)$ can be nonzero. In Theorem~\ref{teo1}, we see that conditioned on non-extinction of the underlying GWP, the speed of BBM remains as $\sqrt{2\beta m}$ compared to a BBM with the same offspring mean but with $\lambda(0)=0$, which can be explained as follows. When the supercritical GWP is conditioned on non-extinction, and decomposed into skeleton and doomed particles (see Lemma~\ref{decomp}), the doomed particles do not contribute to the speed, and the skeleton particles alone form another GWP with the same mean as the original GWP.
\subsection{Proof of Theorem~\ref{teo1}}

\noindent\underline{Lower bound:}

We emphasize that in this proof, the branching rate for all types of particles for all processes considered is constant and is equal to $\beta$, and all probabilities written should be understood as conditioned on the non-extinction of the underlying GWP.

First, decompose the supercritical GWP into two sets of particles: particles with infinite line of descent, which, following the terminology in \cite{BSS2013}, we call skeleton particles; and particles with finite line of descent, which we call the doomed particles. By Lemma~\ref{decomp}, each skeleton particle gives rise to a GWP consisting only of skeleton particles with p.g.f.
\begin{equation} f^*(s):=[f(q+\bar{q}s)-q]/\bar{q}, \label{eq10}
\end{equation}
where $q$ is the probability of extinction for the underlying GWP and $\bar{q}=1-q$. Also, by Lemma~\ref{decomp}, each doomed particle starts her own single-type GWP with p.g.f.
\begin{equation} \tilde{f}(s):=f(qs)/q. \label{eq11}
\end{equation}
Observe that $(f^*)'(1)=f'(1)=\mu$. Recall that $m=\mu-1>0$. It is clear that each skeleton particle produces at least one skeleton particle, and can produce doomed particles as well, whereas a doomed particle only produces doomed particles.

Now we translate this decomposition into the language of multi-type GWPs, see \cite[Chapter 5]{AN1972}. We define the skeleton particles to be of type $1$, and the doomed particles to be of type $2$, so that the underlying GWP is a two-type GWP, and we have the following decomposition for the BBM:
\begin{equation} (Z(t))_{t\geq 0}=(Z^1(t),Z^2(t))_{t\geq 0}, \nonumber
\end{equation}
where $Z^1$ is the process consisting of the skeleton particles, and $Z^2$ is the one consisting of the doomed particles. Note that $Z^1$ by itself is a BBM, however, $Z^2$ is not. Let $|Z^j(t)|$, $j=1,2$ be the number of particles of type $j$ existing at time $t$. Observe that conditioning $Z$ on non-extinction is equivalent to the initial condition $(|Z^1(0)|,|Z^2(0)|)=(1,0)$, which says that the process starts with one skeleton particle, hence never dies out. For $d\geq1$, let us define the range of $Z^1$ as $R^1$, and the range of $Z^2$ as $R^2$. Similarly, define
\begin{align} M^1(t):=&\inf \left\{r>0:R^1(t)\subseteq B(0,r)\right\}, \nonumber \\
              M^2(t):=&\inf \left\{r>0:R^2(t)\subseteq B(0,r)\right\}. \nonumber
\end{align}
Now since $M(t)=\max\left\{M^1(t),M^2(t)\right\}$, $f^*(0)=0$, $(f^*)'(1)-1=m$, and $\beta m$ in the speed is invariant under how one describes the branching (see the remark after Proposition~\ref{prop1}), Proposition~\ref{prop1} immediately gives the desired lower bound: $\forall \: \varepsilon>0$,
\begin{equation} P\left(\frac{M(t)}{t}>\sqrt{2\beta m}-\varepsilon\right)\rightarrow 1 \quad \text{as} \quad t\rightarrow \infty. \nonumber
\end{equation}

\noindent\underline{Upper bound:}

Recall that $\mathcal{E}$ denotes extinction and $P(\mathcal{E}^c)>0$.
Clearly, $\forall \: \varepsilon>0$,
\begin{align*} P\left(\frac{M(t)}{t}>\sqrt{2\beta m}+\varepsilon\right)&=P\left(\frac{M(t)}{t}>\sqrt{2\beta m}+\varepsilon\mid \mathcal{E}^c\right)P(\mathcal{E}^c)\\
&+P\left(\frac{M(t)}{t}>\sqrt{2\beta m}+\varepsilon;\ \mathcal{E}\right).
\end{align*}
The last term converges to zero, by bounded convergence. Consequently,  the limit
$$\lim_{t\to\infty}P\left(\frac{M(t)}{t}>\sqrt{2\beta m}+\varepsilon\mid \mathcal{E}^c\right)=0$$
is tantamount to
\begin{equation*}
\lim_{t\to\infty}P\left(\frac{M(t)}{t}>\sqrt{2\beta m}+\varepsilon\right)=0.
\end{equation*}
When $\lambda(0)=0$, then of course,  we are immediately done by Proposition \ref{prop1}.

In the general case, we finish the proof as follows. Let $\mathcal{N}_t^*$ denote the {\it total progeny} up to $t$, that is, all the particles that have been born not later than $t$ (but possibly not present at $t$). Then, using the  union bound, for $\gamma>0$,
\begin{align}\label{must.go.to.zero} P\left(M(t)>\gamma t\right)
& = P\left(\exists u\in \mathcal{N}_t^*\ :\ \sup_{0\le s\le t}|X_u(s)|>\gamma t\right) \nonumber \\
& \le E(|\mathcal{N}_t^*|)\mathbf{P}_0\left(\sup_{0\le s\le t}|B(s)|>\gamma t\right),
\end{align}
where $B$ is a generic standard Brownian particle starting at the origin, with probability $\mathbf P_0$. Now pick $\gamma=\sqrt{2\beta m}+\varepsilon$. By Lemma \ref{hittingtimes}, the following estimate is sufficient to complete the proof of the upper bound: for any $c>m\beta$,
\begin{equation}
g(t):=E(|\mathcal{N}_t^*|)\le  \exp(c t)\label{homework}
\end{equation}
holds for all sufficiently large $t$. Indeed, then picking $c\in (\beta m,\gamma^2/2)$, the righthand side of \eqref{must.go.to.zero} converges to zero. 

To prove \eqref{homework}, condition on the first branching time, and obtain the functional equation for $g$:
$$g(t)=e^{-\beta t}+\int_0^t [p_0+\mu g(t-s)]\beta e^{-\beta s}\, \mathrm{d}s=e^{-\beta t}+\int_0^t [p_0+\mu g(s)]\beta e^{-\beta (t-s)}\, \mathrm{d}s.$$
(We include the $p_0$ term as we  describe the total progeny up to $t$.)
That is,
$$G(t):=e^{\beta t}g(t)=1+\int_0^t [p_0+\mu g(s)]\beta e^{\beta s}\, \mathrm{d}s.$$
Differentiating,
$$G'(t)=[p_0+\mu g(t)]\beta e^{\beta t}=p_0 \beta e^{\beta t}+\mu\beta G(t).$$
Comparing $E(|\mathcal{N}_t^*|)$ with just the expected population size at $t$, obviously, $g(t)\ge e^{m\beta t}$, that is, $G(t)\ge e^{\mu\beta t}$.
Since $\mu>1$, for any given $\delta>0$,
$$G'(t)\le (\delta+\mu\beta) G(t),$$
for $t\ge T_{\delta}:=\frac{\log(p_0\beta/\delta)}{m\beta}$. By Gronwall's inequality, $G(t)\le G(T_{\delta}) \cdot e^{(\delta+\mu\beta)t}$ and so 
$g(t)\le G(T_{\delta}) \cdot e^{(\delta+m\beta)t}$ for $t\ge T_{\delta}$.
 \qed

\begin{remark} [Speed comparison]\label{pgfcomparison}
Using Jensen's inequality, it is easy to show that if $s\in[0,1]$, then $f^*(s) \leq \tilde{f}(s)$. Exploiting this inequality, and using the notation of the lower bound in the previous proof,  one can show then that
for any Borel set $B$, and any fixed time $t\geq s$,
\begin{equation} P^1_{s,x}(R^1(t)\subset B)\leq P^2_{s,x}(R^2(t)\subset B), \label{remain}
\end{equation}
where  $P^1_{s,x}$ and $P^2_{s,x}$ are the corresponding probabilities for processes that start at time $s$ with a single particle at position $x\in \mathbb{R}^d$.
\end{remark}

We close this section with the following proposition, which will be used in the proof of the lower bound of Theorem~\ref{teo2}. Although the statement is not about the speed of the process, we included it here, as some machinery developed in this section is being used.

\begin{proposition}\label{twotype}
Let $N$ be a supercritical Galton-Watson process with offspring p.g.f.\ $f$ and extinction probability $0<q<1$, and let $N^*$ be the process generated by the skeleton particles. Let $\bar{q}=1-q$. Then, the two-type Galton-Watson process $\mathbf{N}=(N^*,N-N^*)$, where the particles of type 1 and type 2 are composed of skeleton and doomed particles, respectively, has the mean matrix
\begin{equation} M=
\begin{pmatrix} 
\mu & (\mu-\kappa)\frac{q}{\bar{q}}  \\  0 & \kappa
\end{pmatrix}, \label{mm}
\end{equation}
where $\mu:=f'(1)$, $\kappa:=f'(q)$, and $M_{ij}=E\mathbf{N}_j^i(1)$, $i,j=1,2$, are the expected number of offspring of type $j$ that a single particle of type $i$ has.
\end{proposition}

\begin{proof} 
Let $f^1$, $f^2$ be the offspring p.g.f.s for skeleton and doomed particles, respectively. The p.g.f.\ for doomed particles was computed in part (1) of Lemma~\ref{decomp}: $f^2(s,t)=f(qt)/q$. If we let $\xi$, $\xi^*$ be random variables that are independent copies of $N(1)$, $N^*(1)$, respectively, then the p.g.f.\  for skeleton particles is the same as the joint p.g.f.\ of $\xi-\xi^*$ and $\xi^*$ conditioned on the event $\left\{\xi^*\geq 1\right\}$. We have
\begin{align} f^1(s,t)=E[s^{\xi^*}t^{\xi-\xi^*}\mid \xi^*\geq 1]=& \frac{E[s^{\xi^*}t^{\xi-\xi^*}\mathbbm{1}_{\left\{\xi^*\geq 1\right\}}]}{P(\xi^*\geq 1)} \nonumber \\
=& \frac{E[s^{\xi^*}t^{\xi-\xi^*}]-E[s^{\xi^*}t^{\xi-\xi^*}\mathbbm{1}_{\left\{\xi^*=0\right\}}]}{\bar{q}} \nonumber \\
=& \frac{f(\bar{q}s+qt)-f(qt)}{\bar{q}}, \label{eq227}
\end{align}
where we have used that $E[s^{\xi^*}t^{\xi-\xi^*}]=f(\bar{q}s+qt)$ (see \cite[Proposition 4.10]{L1992}), and part (1) of Lemma~\ref{decomp} in the last equality.

The mean matrix $M$ for a $k$-type GWP is defined to be the $k\times k$ matrix whose $(i,j)$th entry is $M_{ij}=\frac{\partial f^i (\mathbf{s})}{\partial s_j}\biggr\vert_{\mathbf{s}=1}$, where $\mathbf{s}=(s_1,\ldots,s_k)\in[0,1]^k$, and $f^i (\mathbf{s})$ is the offspring p.g.f.\ for particles of type $i$.

As we have a two-type process, let us set $\mathbf{s}=(s,t)$. Setting $s=1$ in (\ref{eq227}) yields the p.g.f.\ for the number of doomed offspring of a skeleton particle:
\begin{equation}  E[t^{\xi-\xi^*}|\xi^*\geq 1]=\frac{f(qt+\bar{q})-f(qt)}{\bar{q}}. \label{joint3}
\end{equation}
Differentiating (\ref{joint3}) with respect to $t$ and setting $t=1$ gives the desired expectation 
\begin{equation}  \frac{\partial f^1 (\mathbf{s})}{\partial t}\biggr\vert_{\mathbf{s}=1}=(f'(1)-f'(q))\frac{q}{\bar{q}}. \label{joint4}
\end{equation} 
It is clear from parts (1) and (2) of Lemma~\ref{decomp} that 
\begin{align} \frac{\partial f^1 (\mathbf{s})}{\partial s}\biggr\vert_{\mathbf{s}=1}=&\ f'(1), \nonumber \\
\frac{\partial f^2 (\mathbf{s})}{\partial s}\biggr\vert_{\mathbf{s}=1}=&\ 0, \nonumber \\
\frac{\partial f^2 (\mathbf{s})}{\partial t}\biggr\vert_{\mathbf{s}=1}=&\ f'(q). \label{joint5}
\end{align}
This completes the proof.
\end{proof}

\section{Proof of Theorem~\ref{teo2}}
Part 3 of the theorem follows since $\tau<\infty$ almost surely for a non-supercritical process. For a detailed proof of part 3, see \cite[Theorem 1]{OC2013}. Since $\lambda(0)=0$ gives $\alpha=1$ and $P(\mathcal{E}^c)=1$, conditioning the process on $\mathcal{E}^c$ is the same as not conditioning it when $\lambda(0)=0$. Hence, we may extend part 2 to cover for the case $\lambda(0)=0$, and prove the first two parts of the theorem together. Note that the key ingredients in the proof are results concerning the decomposition of supercritical GWPs and the speed of BBM, namely Lemma~\ref{decomp} and Theorem~\ref{teo1}.

\subsection{Proof of the lower bound}
Let $m>0$. Fix $\beta$, $d$, $f$ and $\eta$, $c$. The scenario below yields the desired lower bound for the annealed survival probability conditioned on non-extinction, i.e., $(\mathbb{E}\times P) \left(T>t|\mathcal{E}^c\right)$.

\begin{enumerate}
\item Recall that the first particle must be of infinite line of descent once the process is conditioned on non-extinction. Suppress the branching of the initial skeleton particle  so that there is precisely one skeleton particle in the system up to time $\eta t$. This is equivalent to requiring that whenever the skeleton particle branches within the period $[0,\eta t]$, it gives precisely one offspring of its own kind (and possibly some doomed offspring). Therefore, by the fourth part of Lemma~\ref{decomp}, this event has probability
\[ \exp[-\beta \alpha \eta t]. \]

\item For $d\geq 2$, empty the two-sided cylinder (``tube") $T_t$, defined by
\begin{equation} 
\begin{split}
T_t=\left\{ \vphantom{\sqrt{x_2^2+\ldots+x_d^2}\leq r(t)+h(t)} x=(x_1,\ldots, x_d)\in \mathbb{R}^d:|x_1|\leq kt+h(t),\right. \\ \left. \ \ \ \sqrt{x_2^2+\ldots+x_d^2}\leq r(t)+h(t)\right\},  \nonumber
\end{split}
\end{equation}
where $k>c$, and $t\mapsto r(t)$ and $t\mapsto h(t)$ are non-negative mappings that are picked such that $\lim_{t\rightarrow \infty}r(t)/t=0$, $\lim_{t\rightarrow \infty}r(t)=\infty$ and $\lim_{t\rightarrow \infty}h(t)/t=0$, $\lim_{t\rightarrow \infty}h(t)=\infty$. By `emptying a region', we mean clearing a region from trap points.
By the upper bounds above on $r(t)$ and $h(t)$, Lemma~\ref{intensity} gives that the probability to empty $T_t$ is
$\exp[o(t)]$
for $d\geq 2$. For $d=1$, empty the line $T^1_t:=\left\{x_1\in \mathbb{R}:|x_1|\leq 2h(t) \right\}$, which again costs a probability of
$\exp[o(t)].$

\item For $d\geq 2$, move the single skeleton particle (see the remark after Lemma~\ref{decomp}) during the period $[0,\eta t]$ so that it is at a specific site at distance $ct+o(t)$ from the origin at time $\eta t$ (assume without loss of generality that the specific site has first coordinate $ct+o(t)$ and all other coordinates zero), and confine it to the smaller tube $\hat{T}_t$, defined by
\begin{equation} \hat{T}_t=\left\{x=(x_1,\ldots, x_d)\in \mathbb{R}^d:|x_1|\leq kt,\ \sqrt{x_2^2+\ldots+x_d^2}\leq r(t)\right\}, \nonumber
\end{equation}
up to time $\eta t$.

Let $P_0$ be the $d$-dimensional Wiener measure, $W^1$ be the projection of the $d$-dimensional Brownian motion onto the first coordinate axis, and decompose the Brownian motion into an independent sum $W=W^1+W^{d-1}$, where $W^{d-1}$ is hence implicitly defined. Define the events
\begin{align} A_t=& \left\{ct\leq |W^1_{\eta t}| \leq ct+o(t)\right\},  \nonumber \\
              B_t=& \left\{|W^1_s|\leq kt \ \  \forall \  0\leq s \leq \eta t \right\},  \nonumber \\
							C_t=& \left\{|W^{d-1}_s|\leq r(t) \ \  \forall \  0\leq s \leq \eta t \right\}. \nonumber
\end{align}
Note that the event $A_t\cap B_t\cap C_t$ is exactly the desired scenario for this part. The formula for the transition density for $d$-dimensional Brownian motion gives $P_0(A_t)=\exp[-\frac{c^2}{2\eta}t+o(t)]$, and Lemma~\ref{hittingtimes} gives $P_0(B_t^c)=\exp[-\frac{k^2}{2\eta}t+o(t)]$, so it follows that
\begin{align} \exp\left[-\frac{c^2}{2\eta}t+o(t)\right]=&P_0(A_t)\geq P_0(A_t\cap B_t)\geq P_0(A_t)-P_0(B_t^c) \nonumber \\
              =&\exp\left[-\frac{c^2}{2\eta}t+o(t)\right]-\exp\left[-\frac{k^2}{2\eta}t+o(t)\right] \nonumber \\
							=&\exp\left[-\frac{c^2}{2\eta}t+o(t)\right], \nonumber
\end{align}
which yields
\begin{equation} P_0(A_t\cap B_t)=\exp\left[-\frac{c^2}{2\eta}t+o(t)\right]. \label{eq20}
\end{equation}
Since $r(t)\rightarrow \infty$, we have $P_0(C_t)=\exp[o(t)]$. Combining this with (\ref{eq20}) and using the independence of $W^1$ and $W^{d-1}$, we find
\begin{equation} P_0(A_t\cap B_t \cap C_t)=\exp\left[-\frac{c^2}{2\eta}t+o(t)\right]. \nonumber
\end{equation}

For $d=1$, note that the function in (\ref{I}) is minimized when $c=0$ (see the proof of Theorem~\ref{teo3}). Confine the single skeleton particle to the one-dimensional tube $\hat{T}^1_t:=\left\{x_1\in \mathbb{R}:|x_1|\leq h(t) \right\}$ up to time $\eta t$. Since the length of the tube tends to infinity as $t$ tends to infinity, the probability of this event is
$\exp[o(t)].$

\item Confine all the doomed particles that are created within the period $[0,\eta t]$ to the larger tubes $T_t$ and $T^1_t$ for $d\geq 2$ and $d=1$, respectively, up to time $t$. Since the number of occurrences of branching up to time $t$ along a single ancestral line is a Poisson process with mean $\beta t$, and a skeleton particle on average produces $(\mu-\kappa)q/(1-q)$ doomed particles every time it branches (see Proposition~\ref{twotype}), it follows that at most $\lfloor \beta \eta t (\mu-\kappa)q/(1-q) \rfloor$ doomed particles are produced along the single skeletal line up to time $\eta t$ with probability $\exp[o(t)]$. Let the radii\footnote{By radius we mean the radius of the smallest ball centered at the root of the subtree, containing the (finite) range of the subtree.} of the subtrees initiated by these $\lfloor \beta \eta t (\mu-\kappa)q/(1-q) \rfloor=:n(t)$ doomed particles be $\rho_1, \rho_2, \ldots, \rho_{n(t)}$. If $K(t):=\max\left\{\rho_1, \rho_2, \ldots, \rho_{n(t)} \right\}$, then
\begin{equation}
P(K(t)<h(t))=P(\rho_1<h(t))^{n(t)}. \label{cost}
\end{equation}
Now, since a doomed subtree is almost surely finite, $\lim_{t\rightarrow \infty}h(t)=\infty$ implies that $P(\rho_1<h(t))\rightarrow 1$, which in turn implies that the probability in (\ref{cost}) is $\exp[o(t)]$. To see this, note that $n(t)$ is in the form $Ct$, $C>0$, and
the right-hand side of (\ref{cost}) is $\exp[n(t)\log P(\rho_1<h(t))]$ where $\log(P(\rho_1<h(t))\rightarrow 0$ as $t\rightarrow \infty$. We have already confined the single skeleton particle to the smaller tube $\hat{T}_t$ for $d\geq 2$ ($\hat{T}^1_t$ for $d=1$). Since each doomed particle that is produced along the single skeletal line within the period $[0,\eta t]$ must be produced within the smaller tube $\hat{T}_t$($\hat{T}^1_t$), and the dimensions of the larger tube $T_t$($T^1_t$) all exceed the corresponding dimensions of the smaller tube by $h(t)$, we conclude that all of the doomed particles that are created within the period $[0,\eta t]$ stay inside the larger tube with probability
$\exp[o(t)].$

\item Empty a $(\sqrt{2\beta m}+\varepsilon)(1-\eta)t$-ball around the position of the single skeleton particle at time $\eta t$. By Lemma~\ref{intensity}, this event has probability
\[ \exp\left[-l g_d((\sqrt{2\beta m}+\varepsilon)(1-\eta),c) t+o(t)\right].  \]

\item Require the branching system initiated by the single skeleton particle present at time $\eta t$ to stay inside the $(\sqrt{2\beta m}+\varepsilon)(1-\eta)t$-ball during the remaining time $(1-\eta)t$, where $\varepsilon>0$. The probability of this event conditioned on non-extinction is $\exp[o(t)].$ To see this, note that when $m>0$, by Theorem~\ref{teo1}, for every $\varepsilon>0$, $P\left(M(t)> (\sqrt{2\beta m}+\varepsilon) t\ |\ \mathcal{E}^c\right) \rightarrow 0$ as $t\rightarrow \infty$.

\end{enumerate}

Since the motion, branching and trap formation mechanisms are independent of each other, minimizing the cost of all these events over the parameters $\eta$ and $c$, and letting $\varepsilon\rightarrow 0$ in part 5 provides us with the desired lower estimate for $(\mathbb{E}\times P) (T>t \: |\: \mathcal{E}^c)$. Note that the survival scenario described above is composed of three large deviation events: suppressing the branching for a linear time, moving the Brownian particle to a linear distance, and emptying a ball with linear radius. The remaining three components, namely parts two, four and six are not large deviation events and do not give exponential costs.

\subsection{Proof of the upper bound}

Our approach is as follows. We first split the time interval $[0,t]$ into two pieces at the point  $\eta t$, $\eta \in [0,1]$, which is the instant when the population of skeleton particles exceed $\lfloor t^{d+\varepsilon} \rfloor$. We discretize the interval to pinpoint the optimal $\eta$. In order to obtain an upper bound, we ignore the possibility of trapping in the first piece, but take into account the cost of polynomial growth rather than the expected exponential growth of particles. In the second piece, given that we now have at least $\lfloor t^{d+\varepsilon} \rfloor +1$ particles in the system, we argue that at least one of these particles should have a trap-free ball of certain radius, say $\rho_t$, around it. For an upper bound, this gives us the combined cost of moving a Brownian particle to a distance $ct$, $c \geq 0$, during $[0,\eta t] $ and clearing the $\rho_t$-ball around it. To find the optimal $c$, we discretize the space. We note that this approach yields a double sum of terms, of which only the largest one contributes on an exponential scale.

Fix $\beta$, $d$, $f$. Let $0<\varepsilon<1$. Recall that $|Z(t)|$ is the number of particles in the system at time $t$, $|Z^*(t)|$ of which are skeleton particles. For $t>1$, define
\begin{equation} \eta_t=\sup\left\{\eta \in [0,1]:|Z^*(\eta t)|\leq \lfloor t^{d+\varepsilon} \rfloor \right\}.
\end{equation}
Before we proceed, we prove the following lemma:

\begin{lemma} \label{lemma8} For $n\in\left\{1,2,3,\ldots\right\}$ and $i\in\left\{0,1,2,\ldots,n-1\right\}$,
\begin{equation} P\left(\frac{i}{n}< \eta_t \mid \mathcal{E}^c\right)=\exp[-\beta \alpha \frac{i}{n}t+o(t)]. \nonumber
\end{equation}
\end{lemma}
\begin{proof} Let $n\in\left\{1,2,3,\ldots\right\}$ and $i\in\left\{0,1,2,\ldots,n-1\right\}$. By part 4 of Lemma~\ref{decomp}, we know that the process $|Z^*|$ conditioned on non-extinction of $|Z|$ is equal in law to a continuous time branching process with rate $\beta(1-f'(q))=\beta \alpha$ and offspring distribution $\bar{\lambda}$, where $\bar{\lambda}(0)=\bar{\lambda}(1)=0$. On the event $\mathcal{E}^c$, since the first particle is of infinite line of descent and $t>1$, in view of $\left\{\frac{i}{n}< \eta_t\right\}=\left\{|Z^*(\frac{i}{n}t)|\leq \lfloor t^{d+\varepsilon}\rfloor\right\}$, it follows that  $P(\frac{i}{n}< \eta_t \mid \mathcal{E}^c)\geq P(|Z^*(\frac{i}{n}t)|=1)=\exp(-\beta \alpha \frac{i}{n}t)$.

Now consider a strictly dyadic branching Brownian motion $\tilde{Z}$. We have $P(|\tilde{Z}(t)|=k)=e^{-\beta t}(1-e^{-\beta t})^{k-1}$ for every $k\in\left\{1,2,3,\ldots\right\}$ and for every $t\geq 0$, see \cite{KT1975}. It follows that $P(|\tilde{Z}(t)|> k)=(1-e^{-\beta t})^k$. Using the binomial expansion $(1-x)^n=1-nx+{n \choose 2}x^2+\ldots+(-1)^n x^n$, we obtain
\begin{align}
P(|& \tilde{Z}(t)|\leq \lfloor t^{d+\varepsilon}\rfloor) \nonumber \\
=&\ 1-(1-e^{-\beta t})^{\lfloor t^{d+\varepsilon}\rfloor} \nonumber \\
=&\ e^{-\beta t}\left(\lfloor t^{d+\varepsilon} \rfloor -{{\lfloor t^{d+\varepsilon}\rfloor} \choose 2}e^{-\beta t}+\ldots +(-1)^{\lfloor t^{d+\varepsilon}\rfloor-1}e^{-\beta t (\lfloor t^{d+\varepsilon}\rfloor-1)} \right) \nonumber \\
\leq &\ e^{-\beta t}\lfloor t^{d+\varepsilon} \rfloor=\exp[-\beta t+o(t)]. \nonumber
\end{align}
The result follows by comparison with the strictly dyadic case, since $\bar{\lambda}(0)=\bar{\lambda}(1)=0$.
\end{proof}

Henceforth, all probabilities written should be understood as conditioned on $\mathcal{E}^c$.
Now, for every $n\in\left\{1,2,3,\ldots\right\}$,
\begin{align} (&\mathbb{E}\times P)(T>t) \nonumber \\
=&\ \sum_{i=0}^{n-1} (\mathbb{E}\times P)\left(\left\{T>t\right\}\cap \left\{\frac{i}{n}\leq \eta_t <\frac{i+1}{n}\right\} \right)+(\mathbb{E}\times P)\left(\left\{T>t\right\}\cap \left\{\eta_t=1\right\} \right) \nonumber \\
\leq &\ \sum_{i=0}^{n-1} \exp\left[-\beta \alpha \frac{i}{n}t+o(t)\right](\mathbb{E}\times P^{(i,n)}_t)\left\{T>t\right\}+\exp[-\beta \alpha t+o(t)], \label{eq21}
\end{align}
where we use Lemma~\ref{lemma8}, and introduce the conditional probabilities
\begin{equation} P^{(i,n)}_t(\cdot)=P\left(\ \cdot \ \middle| \ \frac{i}{n}\leq \eta_t <\frac{i+1}{n}\right), \quad i=0,1,\ldots,n-1. \nonumber
\end{equation}
Recall that $\left\{\eta_t<(i+1)/n\right\}\subseteq\left\{|Z^*(t(i+1)/n)|>\lfloor t^{d+\varepsilon}\rfloor\right\}$. Let $A_t^{(i,n)}$, $i=0,1,\ldots,n-1$ be the event that among the $|Z^*(t(i+1)/n)|$ skeleton particles alive at time $t(i+1)/n$, there are $\leq \lfloor t^{d+\varepsilon} \rfloor$ particles such that the ball with radius
\begin{equation}
\rho_t^{(i,n)}:=(1-\varepsilon)\sqrt{2\beta m}\left(1-\frac{i+1}{n}\right)t \label{eqradius}
\end{equation}
around the particle is non-empty (i.e., contains a trap point).
Estimate
\begin{align} (\mathbb{E}\times P^{(i,n)}_t)(T>t)\leq (\mathbb{E}&\times P^{(i,n)}_t)(A_t^{(i,n)}) \nonumber \\
+&(\mathbb{E}\times P^{(i,n)}_t)(T>t\ |\ [A_t^{(i,n)}]^c). \label{eq22}
\end{align}
On the event $[A_t^{(i,n)}]^c$ there are $>\lfloor t^{d+\varepsilon} \rfloor$ balls containing a trap, and by Theorem~\ref{teo1} and ($\ref{eqradius}$), the sub-BBM emanating from the center of each ball exits this ball in the remaining time $(1-(i+1)/n)t$ with a probability tending to $1$ as $t\rightarrow \infty$. Abbreviate $m_t:=\lfloor t^{d+\varepsilon}\rfloor +1.$ With negligible error, we may assume that more than half of the $m_t$ sub-BBMs exit their respective balls that each contain a trap point, in the remaining time. More precisely, the probability that at least $\lfloor\frac{1}{2}m_t\rfloor$ of them stay inside their respective balls, is not more than\footnote{Here we used the trivial estimate
$$\binom{m}{\lfloor m/2\rfloor}\le 2^{m}.$$}
 $$2^{m_{t}}\cdot p_t^{\lfloor m_{t}/2\rfloor}=\exp(m_t \log 2 +\lfloor m_{t}/2\rfloor \log p_t)=\text{SES},$$ where $p_t$ is the probability that a single tree stays inside the ball, and SES stands for `superexponentially small in $t$.' (In the exponent, the second term dominates, because $p_t\to 0$.) Clearly, the probability that all trees avoid traps can be upper estimated by the probability that those trees that are exiting the balls avoid traps. Now, on the event that the number of exiting trees is more than $\lfloor m_{t}/2\rfloor$, we estimate as follows. 

Upon exiting, each one of the (at least $\lfloor m_{t}/2\rfloor$+1) sub-BBMs hits a trap inside this ball with a probability at least $C_1/[\rho_t^{(i,n)}]^{d-1}$, where we have assumed that the trap point is on the boundary of the $\rho_t^{(i,n)}$-ball for an upper bound on the trap-avoiding probability. Note that $C_1/[\rho_t^{(i,n)}]^{d-1}$ is just the ratio of the surface area of the ball that intersects the trap (given that the trap point is on the boundary) to the total surface area of the ball, and $C_1$ is a constant that depends on the dimension $d$ and the trap radius $a$. Hence, the second term on the right-hand side of (\ref{eq22}) is bounded above by
\begin{equation} \left[1-\frac{C_1}{[\rho_t^{(i,n)}]^{d-1}}\right]^{\lfloor m_t/2\rfloor+1}\leq \exp[-C_2 t^{1+\varepsilon}] =\text{SES} \nonumber
\end{equation}
uniformly in all parameters, where $C_2>0$ is another constant. Hence, the second term on the right-hand side of (\ref{eq22}) is superexponentially small.

Now consider the first term on the right-hand side of (\ref{eq22}). Since $g_d(rt,bt)=tg_d(r,b)$ for $r,b>0$, the cost of clearing a ball with radius linear in $t$ is exponentially small in $t$ as $t\rightarrow\infty$. Then, for large $t$, we have
\begin{align} (\mathbb{E}\times P^{(i,n)}_t)(A_t&^{(i,n)}\ |\ |Z^*(t(i+1)/n)|=\lfloor t^{d+\varepsilon}\rfloor+j) \nonumber \\ &\geq (\mathbb{E}\times P^{(i,n)}_t)(A_t^{(i,n)}\ |\ |Z^*(t(i+1)/n)|=\lfloor t^{d+\varepsilon}\rfloor+(j+1)) \nonumber
\end{align}
for every $j\in \left\{1,2,\ldots\right\}$. It follows that
\begin{align} (\mathbb{E}\times P^{(i,n)}_t)(A_t&^{(i,n)})\leq (\mathbb{E}\times P^{(i,n)}_t)(A_t^{(i,n)}\ |\ |Z^*(t(i+1)/n)|=\lfloor t^{d+\varepsilon}\rfloor+1). \label{eq23}
\end{align}
We continue to estimate from above as follows. Each skeleton particle alive at time $t(i+1)/n$ is at a random point, whose spatial distribution is identical to that of $W(t(i+1)/n)$, where $W$ denotes the standard Brownian motion. Let $x_0$ be the center of the empty ball at time $t(i+1)/n$ that is closest to the origin. Let $0\leq c<\infty$, $\delta>0$, and define $B_j$, $j=1,\ldots,\lfloor t^{d+\varepsilon}\rfloor+1$ to be the event that $j$th skeleton particle at time $t(i+1)/n$ is at a distance $ct\leq r<(c+\delta)t$ from the origin and that the $\rho_t^{(i,n)}$-ball around it is empty. Then,
\begin{align} (\mathbb{E}\times &P^{(i,n)}_t)(A_t^{(i,n)}\cap \left\{ct\leq |x_0|<(c+\delta)t\right\}\ |\ |Z^*(t(i+1)/n)|=\lfloor t^{d+\varepsilon}\rfloor+1) \nonumber \\
\leq & (\mathbb{E}\times P^{(i,n)}_t)(B_1\cup \ldots \cup B_{\lfloor t^{d+\varepsilon}\rfloor+1}) \nonumber \\
\leq & (\lfloor t^{d+\varepsilon}\rfloor+1)(\mathbb{E}\times P^{(i,n)}_t)(B_1) \nonumber \\
=& (\lfloor t^{d+\varepsilon}\rfloor+1)\exp\left[-\frac{c^2}{2(i+1)/n}t+o(t)\right] \nonumber \\
&\ \times \exp\left[-l g_d\left((1-\varepsilon)\sqrt{2\beta m}(1-\frac{i+1}{n}),c\right)t+O(\delta)t+o(t)\right], \label{eq24}
\end{align}
where we have used the independence of BBM and trap mechanisms, and Lemma~\ref{intensity} in passing to the last equality. Indeed, on the event $A_t^{(i,n)}$ conditioned on $|Z^*(t(i+1)/n)|=\lfloor t^{d+\varepsilon}\rfloor+1$, there is at least 1 skeleton particle with an empty ball around it. From (\ref{eq22}),(\ref{eq23}) and (\ref{eq24}), we obtain
\begin{align} (\mathbb{E}\times &P^{(i,n)}_t)(T>t) \nonumber \\
\leq & \sum_{j=0}^{n-1} (\mathbb{E}\times P^{(i,n)}_t)\left(\left\{\frac{j}{n}\sqrt{2\beta}t\leq |x_0|<\frac{j+1}{n}\sqrt{2\beta}t\right\}\cap A_t^{(i,n)}\right) \nonumber \\
&\ \ +(\mathbb{E}\times P^{(i,n)}_t)\left(\left\{|x_0|\geq \sqrt{2\beta}t\right\}\cap A_t^{(i,n)}\right)+\text{SES} \nonumber \\
\leq & \sum_{j=0}^{n-1} (\lfloor t^{d+\varepsilon}\rfloor+1) \exp\left[-\frac{\beta j^2/n^2}{(i+1)/n}t+o(t)\right] \nonumber \\
&\ \ \times \exp\left[-l g_d\left((1-\varepsilon)\sqrt{2\beta m}(1-\frac{i+1}{n}),\frac{j}{n}\sqrt{2\beta}\right)t+O(1/n)t+o(t)\right] \nonumber \\
&\ \ +\exp[-\beta t+o(t)]+\text{SES}. \label{eq25}
\end{align}
Here, the SES comes from the second term on the right-hand side of (\ref{eq22}). Also, in passing to the last inequality, we have used that the probability of the event $\left\{|x_0|\geq \sqrt{2\beta}t\right\}$ is bounded above by the probability that a single Brownian particle is at a distance $\geq \sqrt{2\beta}t$ at time $t$, which is $\exp[-\beta t+o(t)]$.

Substituting (\ref{eq25}) into (\ref{eq21}), and optimizing over $i,j\in\left\{0,1,\ldots,n-1\right\}$ gives
\begin{align} &\underset{t\rightarrow \infty}{\limsup}\frac{1}{t}\log(\mathbb{E}\times P)(T>t) \nonumber \\
\leq & -\underset{i,j\in\left\{0,1,\ldots,n-1\right\}}{\min}\left\{\frac{\beta \alpha i}{n}+\frac{\beta j^2/n^2}{(i+1)/n}+ l g_d\left((1-\varepsilon)\sqrt{2\beta m}(1-\frac{i+1}{n}),\frac{j}{n}\sqrt{2\beta}\right)\right\}. \nonumber
\end{align}
Now let $\eta=i/n$, $c=j/n\sqrt{2\beta}$, let $n\rightarrow \infty$, use the continuity of the functional form from which the minimum is taken, and finally let $\varepsilon \rightarrow 0$ to obtain the desired upper bound,
\begin{equation} \underset{t\rightarrow\infty}{\limsup}\frac{1}{t}\log(\mathbb{E}\times P)(T>t)\leq -I(l,f,\beta,d).  \nonumber
\end{equation}
Note that $c>\sqrt{2\beta}$ cannot minimize
\begin{equation} \left\{\beta\alpha\eta+\frac{c^2}{2\eta}+l g_d(\sqrt{2\beta m}(1-\eta),c)\right\} \label{eq26}
\end{equation}
since this makes (\ref{eq26}) greater than $\beta$, which can be improved by setting $c=0$ and $\eta=1$.
\qed

\section{Proof of Theorem~\ref{teo3}}

Let
\begin{equation} G_d(\eta,c)=\beta\alpha\eta+\frac{c^2}{2\eta}+l g_d(\sqrt{2\beta m}(1-\eta),c) \label{eq27}
\end{equation}
so that
\begin{equation} I(l,f,\beta,d)=\underset{\eta \in [0,1],c \in [0,\sqrt{2\beta}]}{\text{min}} G_d(\eta,c). \label{eq28}
\end{equation}
To see the existence of the minimizers $\eta^*$, $c^*$ of (\ref{eq28}), note that $G_d$ is lower semicontinuous on the compact subset $[0,1]\times[0,\sqrt{2\beta}]$ of $\mathbb{R}^2$ since
\begin{equation} \underset{(\eta,c)\rightarrow(0,0)}{\lim}\beta\alpha\eta+l g_d\left(\sqrt{2\beta m}(1-\eta),c\right)=l s_d \sqrt{2\beta m} \nonumber
\end{equation}
and $c^2/(2\eta)\geq 0$.

We refer the reader to \cite{E2003} for the proof of their uniqueness when $l\neq l_{cr}$.

Consider $d=1$. Since $g_1(r,b)=2r$, the minimum over $c$ in (\ref{eq28}) is taken at $c^*=0$, so that (\ref{eq28}) reduces to
\begin{align} I(l,f,\beta,d)=&\underset{\eta \in [0,1]}{\text{min}}\left\{\beta\alpha\eta+2l\sqrt{2\beta m}(1-\eta)\right\} \nonumber \\
=& \underset{\eta \in [0,1]}{\text{min}}\left\{\eta(\beta\alpha-2l\sqrt{2\beta m})+2l\sqrt{2\beta m}\right\}. \nonumber
\end{align}
This proves (\ref{eq51}) and (\ref{eq52}), and identifies $l_{cr}$ as the solution to $\beta\alpha=2l\sqrt{2\beta m}$.

Now consider $d\geq 2$. We have
\begin{align} G_d(\eta,c)-G_d(0,0)&=\beta\alpha\eta+\frac{c^2}{2\eta} \nonumber \\
+&l[g_d(\sqrt{2\beta m}(1-\eta),c)-g_d(\sqrt{2\beta m},0)], \label{eq29}
\end{align}
where the last term on the right-hand side is less than or equal to zero, with equality if and only if $(\eta,c)=(0,0)$ (this follows from the definiton of $g_d$). Suppose that $(\eta,c)=(0,0)$ is a minimizer when $l=l_0$. Then, by uniqueness of minimizers, the right-hand side of (\ref{eq29}) is strictly positive for all $(\eta,c)\neq (0,0)$ when $l=l_0$. As the last term in the right-hand side of (\ref{eq29}) is strictly negative for all $(\eta,c)\neq (0,0)$, it follows that for all $l<l_0$, the right-hand side of (\ref{eq29}) is zero when $(\eta,c)=(0,0)$ and strictly positive otherwise. Therefore, there must exist $l_{cr}\in (0,\infty)$ such that
\begin{enumerate}
	\item $(\eta,c)=(0,0)$ is the minimizer when $l<l_{cr}$;
	\item $(\eta,c)=(0,0)$ is not the minimizer when $l>l_{cr}$.
\end{enumerate}
Henceforth, we will take this as the definition of $l_{cr}$ for $d\geq 2$. This proves the first parts of (\ref{eq53}) and (\ref{eq54}). Note that $l_{cr}\in(0,\infty)$, because the last term on the right-hand side of (\ref{eq29}) decreases without bound as $l\rightarrow \infty$ and tends to zero as $l\rightarrow 0$ for fixed $(\eta,c)\neq (0,0)$.

Now let $d\geq 2$ and $l>l_{cr}$. We know that $(\eta,c)=(0,0)$ is not a minimizer. The combination $\eta^*=0$, $c^*>0$ is not possible because $G(0,c)=\infty$ for all $c>0$. The combination $\eta^*>0$, $c^*=0$ is ruled out as well because $G(\eta,0)$ takes its minimum either at $\eta=0$ or $\eta=1$, so $\eta^*>0$ would imply that $\eta^*=1$. However, $\eta^*=1$ can be excluded via \cite[Lemma 4.1]{E2003}. Also, $c^*=\sqrt{2\beta}$ is not possible since this yields $G_d> \beta$, whereas $(\eta,c)=(1,0)$ is not a minimizer yet yields $G_d\leq\beta$ since $\alpha\in(0,1]$. Hence, we conclude that the minimizers for $d\geq 2$ and $l>l_{cr}$ appear in the interior of $[0,1]\times[0,\sqrt{2\beta}]$. This proves the second parts of (\ref{eq53}) and (\ref{eq54}).

In the rest of the proof, we find $l_{cr}$ for $d\geq 2$.
For $R\geq0$, let
\begin{equation} g_d(R)=\int_{B(0,R)}\frac{\text{d}x}{|x+e|^{d-1}}. \label{fd}
\end{equation}
Then, we may write (\ref{eq27}) as
\begin{equation} G_d(\eta,c)=\beta \alpha \eta+\frac{c^2}{2\eta}+l c g_d\left(\frac{\sqrt{2\beta m}(1-\eta)}{c}\right). \label{gd}
\end{equation}
The stationary points are the solutions of the equations $\frac{\partial G_d}{\partial \eta}=0$ and $\frac{\partial G_d}{\partial c}=0$, which yield
\begin{align} 0=& \beta\alpha-\beta m v^2-l\sqrt{2 \beta m} g_d'(u) \nonumber \\
              0=& \sqrt{2 \beta m} v+l[g_d(u)-u g_d'(u)] \label{eq30}
\end{align}
upon setting
\begin{equation} u=\frac{\sqrt{2\beta m}(1-\eta)}{c},\quad v=\frac{c}{\sqrt{2\beta m}\eta}. \label{uv}
\end{equation}
Eliminating $g_d'$ in (\ref{eq30}) gives
\begin{equation} \frac{l}{\sqrt{2\beta m}}g_d(u)=-v+\frac{1}{2m}u(\alpha-m v^2). \label{eq31}
\end{equation}
It follows by (\ref{gd}), (\ref{uv}) and (\ref{eq31}) that for $d\geq 2$ and and $l>l_{cr}$,
\begin{equation} I(l,f,\beta,d)=G_d(u^*,v^*)=\beta(\alpha-m v^{*2}). \label{eq32}
\end{equation}
Since the minimizer $(u^*,v^*)$ must satisfy (\ref{eq31}) (note that there may be more than one pair $(u,v)$ that satisfies (\ref{eq31})), we conclude by (\ref{eq32}) that $v^*$ is the maximal value of $v>0$ on the curve in the $(u,v)$-plane given by (\ref{eq31}).

Recall the following lemma from \cite{E2003}, where a proof is given as well.
\begin{lemma} \label{last}
$R\mapsto g_d'(R)$ is strictly increasing on $(0,1)$, infinite at $1$, and strictly decreasing on $(1,\infty)$.
\end{lemma}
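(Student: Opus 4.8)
\medskip
\noindent\textit{Proof proposal.} We treat the case $d\geq 2$, which is the one in which $g_d$ is used (for $d=1$ one has $g_1(R)=2R$). Write $S^{d-1}=\partial B_1(0)$ for the unit sphere and let $\sigma$ denote its surface measure, so $\sigma(S^{d-1})=s_d$.

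The plan is to recognise $g_d'$, after the substitution $R\mapsto 1/R$, as a radially symmetric Riesz-type potential on $\mathbb{R}^d$ whose kernel is subharmonic. First I would differentiate (\ref{fd}) in $R$, turning the volume integral into a surface integral: for $R\neq 1$, substituting $x=R\omega$,
\[
g_d'(R)=\int_{\partial B_R(0)}\frac{\text{d}S(x)}{|x+e|^{d-1}}=R^{d-1}\int_{S^{d-1}}\frac{\sigma(\text{d}\omega)}{|R\omega+e|^{d-1}} .
\]
Using $|R\omega+e|=R\,|\omega+R^{-1}e|$ and the invariance of $\sigma$ under $\omega=(\omega_1,\dots,\omega_d)\mapsto(-\omega_1,\omega_2,\dots,\omega_d)$, this collapses to $g_d'(R)=\phi(1/R)$, where
\[
\phi(r):=\int_{S^{d-1}}\frac{\sigma(\text{d}\omega)}{|re-\omega|^{d-1}}=\Psi(re),\qquad\Psi(z):=\int_{S^{d-1}}\frac{\sigma(\text{d}\omega)}{|z-\omega|^{d-1}}\ \ (z\in\mathbb{R}^d).
\]
Since $R\mapsto 1/R$ is strictly decreasing, the lemma reduces to showing that $\phi$ is strictly increasing on $(0,1)$, strictly decreasing on $(1,\infty)$, and $\phi(r)\to\infty$ as $r\to 1$.

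The key observation is the elementary identity $\Delta_z|z|^{-(d-1)}=(d-1)|z|^{-(d+1)}>0$ on $\mathbb{R}^d\setminus\{0\}$, which holds precisely because $d\geq 2$; differentiating under the integral sign (legitimate on $\mathbb{R}^d\setminus S^{d-1}$) shows that $\Psi$ is smooth and strictly subharmonic there. As $\Psi$ is radial, $\Psi(z)=\phi(|z|)$ with $r^{-(d-1)}\bigl(r^{d-1}\phi'(r)\bigr)'=\Delta\Psi>0$, so $r\mapsto r^{d-1}\phi'(r)$ is strictly increasing on each of $(0,1)$ and $(1,\infty)$; only its sign remains to be fixed. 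Near the origin the integrand defining $\Psi$ is smooth in $z$ uniformly in $\omega$, so $\Psi$ is smooth at $0$ and $\phi$, which extends to an even smooth function, satisfies $\phi'(0^+)=0$; hence $r^{d-1}\phi'(r)>0$, i.e.\ $\phi'>0$, on $(0,1)$. On $(1,\infty)$ the bounds $s_d(r+1)^{-(d-1)}\leq\phi(r)\leq s_d(r-1)^{-(d-1)}$ give $\phi>0$ and $\phi(r)\to 0$; if $r_0^{d-1}\phi'(r_0)\geq 0$ for some $r_0>1$, strict monotonicity of $r^{d-1}\phi'$ would force $\phi'>0$, hence $\phi$ strictly increasing, on $(r_0,\infty)$ --- impossible since $\phi\downarrow 0$ --- so $\phi'<0$ on $(1,\infty)$. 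Finally, $\liminf_{r\to 1}\phi(r)\geq\int_{S^{d-1}}|e-\omega|^{-(d-1)}\,\sigma(\text{d}\omega)$ by Fatou, and this integral diverges: in a tangential coordinate $\xi$ near $\omega=e$ one has $|e-\omega|^2\sim|\xi|^2$ while $\sigma(\text{d}\omega)\sim|\xi|^{d-2}\text{d}|\xi|$, so the integrand is $\sim|\xi|^{-1}$. Thus $\phi(r)\to\infty$ as $r\to 1$, and transporting this through $g_d'(R)=\phi(1/R)$ gives the lemma.

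I expect the only substantive step to be spotting the reduction to the subharmonic kernel $|z|^{-(d-1)}$; once that is in place, the monotonicity of $r^{d-1}\phi'(r)$ is automatic and the rest is bookkeeping about the regimes $r\to 0$, $r\to 1$, $r\to\infty$. A purely computational alternative is to differentiate the projected representation $\phi(s)=c_d\int_{-1}^{1}(1-t^{2})^{(d-3)/2}(1-2st+s^{2})^{-(d-1)/2}\,\text{d}t$, giving $\phi'(s)=-c_d(d-1)\int_{-1}^{1}(1-t^{2})^{(d-3)/2}(s-t)(1-2st+s^{2})^{-(d+1)/2}\,\text{d}t$; the sign of $\phi'$ is then immediate for $s>1$ (where $s-t>0$ throughout), but for $s\in(0,1)$ --- where $s-t$ changes sign inside the interval --- one is essentially pushed back to the convexity/subharmonicity argument above.
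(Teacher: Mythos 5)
Your proposal is correct, and strictly speaking there is nothing in the paper to compare it with: the paper does not prove Lemma~\ref{last}, it simply quotes it from \cite{E2003}, so your argument serves as a self-contained replacement for that citation. The key identity is right: for $R\neq 1$, $g_d'(R)=R^{d-1}\int_{S^{d-1}}|R\omega+e|^{1-d}\,\sigma(\text{d}\omega)=\Psi(e/R)=\phi(1/R)$ after scaling out $R$ and using the reflection invariance of $\sigma$, and the inversion $R\mapsto 1/R$ correctly swaps the two intervals, so the stated monotonicity of $g_d'$ on $(0,1)$ and $(1,\infty)$ is exactly monotonicity of $\phi$ on $(1,\infty)$ and $(0,1)$. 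The sign bookkeeping also checks out: $\Delta|z|^{1-d}=(d-1)|z|^{-(d+1)}>0$ for $d\geq 2$, so $r\mapsto r^{d-1}\phi'(r)$ is strictly increasing on each of $(0,1)$ and $(1,\infty)$; smoothness (hence boundedness of $\phi'$) near the origin forces $r^{d-1}\phi'\to 0$ as $r\to 0^+$ and thus $\phi'>0$ on $(0,1)$; the bound $\phi(r)\leq s_d(r-1)^{1-d}$ with $\phi>0$ rules out $\phi'\geq 0$ anywhere on $(1,\infty)$; and the Fatou argument with the local computation $|e-\omega|^{1-d}\sigma(\text{d}\omega)\sim|\xi|^{-1}\text{d}|\xi|$ gives the blow-up at $1$, hence $g_d'(R)\to\infty$ as $R\to 1$. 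You are also right to restrict to $d\geq 2$: for $d=1$ one has $g_1'\equiv 2$, and the paper only invokes the lemma in its $d\geq 2$ analysis. What your route buys is a conceptual explanation of the crossover at $R=1$ (strict subharmonicity of the Riesz kernel $|z|^{1-d}$ acting on the surface measure of the unit sphere), whereas the reference handles the same one-dimensional reduction you mention as an alternative; your observation that the elementary sign argument only works for $s>1$, so that some convexity/subharmonicity input is genuinely needed on $(0,1)$, is accurate.
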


Using (\ref{lcr}), we may write (\ref{eq31}) as
\begin{equation}   \frac{g_d(u)l}{2s_d m l_{cr}^*}=-v+\frac{1}{2m}u(\alpha-m v^2). \label{eq33}
\end{equation}
Since $g_d(u)\sim s_d u$ as $u\rightarrow \infty$ by (\ref{fd}), we obtain from Lemma~\ref{last} that the left-hand side of (\ref{eq33}) is strictly convex on $(0,1)$, has infinite slope at $1$, is strictly concave on $(1,\infty)$, and has limiting derivative $l/(2 m l_{cr}^*)$ (see Figure 1).

\vspace{1cm}

\setlength{\unitlength}{0.4cm}

\begin{picture}(22,18)(0,-8)

  \put(0,0){\line(18,0){18}}
  \put(0,0){\line(0,12){12}}
  \put(0,0){\line(0,-3){3}}
  \put(-1,-.3){$0$}
  \put(19,-.3){$u$}
  {\thicklines
   \qbezier(0,-2)(6,2)(12,6)
   \qbezier(0,0)(4.5,0.2)(5,3)
   \qbezier(5,3)(5.1,5.5)(9,10)
  }
  \qbezier[30](0,3)(3,3)(5,3)
  \qbezier[30](5,0)(5,1.5)(5,3)
  \qbezier[90](0,0)(5,6)(10,12)
  \put(5,3){\circle*{.25}}
  \put(5,-1){$1$}
  \put(-1.5,-2.2){$-v$}
  \put(9.5,10){(1)}
  \put(12.5,6){(2)}

  \put(0,-5)
  {\small
   Figure\ 1.~ Qualitative plot of: (1) $u\mapsto \frac{g_d(u)l}{2s_d m l_{cr}^*}$, (2) $u\mapsto -v+\frac{1}{2m}u(\alpha-m v^2)$.
  \normalsize
  }
  \put(0,-6)
  {\small
   The dotted line is $u\mapsto \frac{1}{2 m} \frac{l}{l^*_{cr}}u$.
  \normalsize
  }

\end{picture}

Firstly, see that $l_{cr}\leq \alpha l_{cr}^*$. Indeed, assume the contrary. Then, there exists $\bar{l}\in(\alpha l_{cr}^*,l_{cr})$, which implies that $(\eta,c)=(0,0)$ is the unique minimizer for $G_d$ when $l=\bar{l}$. However, $G_d(0,0)=\beta \bar{l}/l_{cr}^*>\beta\alpha=G_d(1,0)$, which contradicts $(\eta,c)=(0,0)$ being the minimizer. This implies that $l_{cr}\leq \alpha l_{cr}^*$. We claim that $l_{cr}$ can be identified by the following formula
\begin{equation} l_0:=\sup\left\{l>0:\frac{l}{l_{cr^*}}\frac{1}{2s_d m}g_d(u)>-\sqrt{\frac{1}{m}\left(\alpha-\frac{l}{l_{cr^*}}\right)}+\frac{1}{2m}\frac{l}{l_{cr^*}}u \quad \forall u\in(0,\infty)\right\}. \label{eq34}
\end{equation}

In order to prove our claim, i.e., $l_0=l_{cr}$, we need to show that $(\eta,c)=(0,0)$ is the minimizer for $G_d$ when $l<l_0$, and $(\eta,c)=(0,0)$ is not the minimizer when $l>l_0$. Equivalently, as $G_d(0,0)=\beta l/l_{cr}^*$, we need to show via (\ref{eq32}) that for $l>l_0$ there exists a $v>\sqrt{1/m\left(\alpha-l/l_{cr^*}\right)}$ such that $v$ satisfies (\ref{eq33}) for some $u>0$, and that for $l<l_0$ there is no such $v$. As we know that $l_{cr}>0$ and $(\eta,c)=(0,0)$ is the unique minimizer when $l<l_{cr}$, there must exist an $l_1>0$ such that the inequality in (\ref{eq34}) holds for all $u\in(0,\infty)$ when $l=l_1$, hence $0<l_1<l_0$. Now take any $0<l_2<l_1$, and see that the inequality in (\ref{eq34}) holds for all $u\in(0,\infty)$ also for $l=l_2$. This implies that if $l<l_0$, then the curve on the left-hand side of (\ref{eq33}) and the line on the right-hand side do not touch when $v=\sqrt{1/m\left(\alpha-l/l_{cr^*}\right)}$. As both terms on the right-hand side of (\ref{eq33}) decrease as $v$ increases and the left-hand side of (\ref{eq33}) does not depend on $v$, the curve on the left-hand side of (\ref{eq33}) and the line on the right-hand side do not touch for $v>\sqrt{1/m\left(\alpha-l/l_{cr^*}\right)}$ as well when $l<l_0$.

Now consider $l>l_0$. By definition of $l_0$, there exists $u\in (0,\infty)$, say $\bar{u}$, such that
\begin{equation} \frac{l_0}{l_{cr^*}}\frac{1}{2s_d m}g_d(u)=-\sqrt{\frac{1}{m}\left(\alpha-\frac{l_0}{l_{cr^*}}\right)}+\frac{1}{2m}\frac{l_0}{l_{cr^*}}u. \label{eq35}
\end{equation}
Note that since $g_d(u)<s_d u$ for all $u\in (0,\infty)$, (\ref{eq35}) implies that we have the strict inequality $l_0<\alpha l_{cr}^*$. Now take any $l_1\in(l_0,\alpha l_{cr}^*)$, and observe that at $u=\bar{u}$, the right-hand side of (\ref{eq35}) becomes greater than the left-hand side if we replace $l_0$ by $l_1$. Also, since $g_d(u)\sim s_d u$, the left-hand side of (\ref{eq35}) must be greater than the right-hand side for large $u$ if we replace $l_0$ by $l_1$. As the functions on both sides of (\ref{eq35}) are continuous with respect to $u$, it follows by the intermediate value theorem that there exists $u\in (0,\infty)$, say $\tilde{u}$, such that (\ref{eq35}) holds, with $l_0$ replaced by $l_1$. This in turn implies that $G_d(u,v)=G_d(\tilde{u},\sqrt{1/m(\alpha-l_1/l_{cr^*})})=\beta l_1/l_{cr}^*$, but since the minimizer is unique, this shows that $(\eta,c)=(0,0)$ is not the minimizer for $l_0<l_1<l_{cr}^*$. As we have shown that $(\eta,c)=(0,0)$ is not the minimizer when $l\in (l_0,l_{cr}^*)$, we conclude by definition of $l_{cr}$ that $(\eta,c)=(0,0)$ is not the minimizer for $G_d$ for all $l>l_0$. Hence, $l_0=l_{cr}$.

Now put
\begin{equation}  \hat{g}_d(u)=s_d u-g_d(u) \quad \quad M_d=\frac{1}{2s_d m}\underset{u\in(0,\infty)}{\max}\hat{g}_d. \nonumber
\end{equation}
Then, (\ref{eq34}) reads
\begin{equation} l_{cr}=\sup\left\{l>0:\sqrt{\frac{1}{m}\left(\alpha-\frac{l}{l_{cr}^*}\right)}>\frac{l}{l_{cr}^*}M_d\right\}.
\end{equation}
This implies that
\begin{equation} \sqrt{\frac{1}{m}\left(\alpha-\frac{l_{cr}}{l_{cr}^*}\right)}=\frac{l_{cr}}{l_{cr}^*}M_d, \nonumber
\end{equation}
which completes the proof of (\ref{eq55}) and (\ref{eq56}).

We finally show that $u^*\in(0,1)$ when $l>l_{cr}$. Note that for a fixed $v$, if the line on the right-hand side of (\ref{eq33}) cuts the curve on the left-hand side at more than one point, then by (\ref{eq32}), this $v$ cannot be $v^*$ since the minimizer is unique. Hence, we are looking for a $v$ value such that (\ref{eq33}) is satisfied for exactly one $u$ value, and this pair is $(u^*,v^*)$. This implies that when $v=v^*$, the line on the right-hand side of (\ref{eq33}) is tangent to the curve on the left-hand side at $u=u^*$. Since $v^*>\sqrt{1/m(\alpha-l/l_{cr^*})}$, the slope of the line on the right-hand side of (\ref{eq33}) is $<1/(2m)(l/l_{cr}^*)$. Then, since the left-hand side of (\ref{eq33}) has infinite slope at $u=1$, is concave on $(1,\infty)$, and decreases asymptotically to $1/(2m)(l/l_{cr}^*)$, it follows that $u^*\in(0,1)$. By (\ref{uv}), we conclude that $c^*>\sqrt{2\beta m}(1-\eta^*)$. This completes the proof of Theorem~\ref{teo3}.

\section*{Acknowledgement}
We would like to thank the anonymous reviewers for their valuable comments that helped to improve the manuscript.

\bibliographystyle{plain}

\begin{thebibliography}{1}

\bibitem{B1978}
M. Bramson.
\newblock Maximal displacement of branching Brownian motion.
\newblock {\em Communications on Pure and Applied Mathematics} \textbf{31} (1978) 531 -- 581.

\bibitem{B1983}
M. Bramson.
\newblock Convergence of solutions of the Kolmogorov equation to travelling waves.
\newblock {\em Memoirs of the American Mathematical Society} 44, no. 285, 1983.

\bibitem{AN1972}
K. Athreya and P. Ney.
\newblock {\em Branching Processes}, Springer-Verlag, Berlin, 1972.

\bibitem{BKM2011}
J. Berestycki, A. E. Kyprianou and A. Murillo-Salas.
\newblock The prolific backbone for supercritical superprocesses.
\newblock {\em Stochastic Processes and their Applications} \textbf{121} (2011) 1315 -- 1331.

\bibitem{BFM2008}
J. Bertoin, J. Fontbona and S. Martinez.
\newblock On prolific individuals in a supercritical continuous-state branching process.
\newblock {\em Journal of Applied Probability} \textbf{45} (2008) 714 -- 726.

\bibitem{CY2003}
L. Chaumont and M. Yor.
\newblock {\em Exercises in Probability, A Guided Tour from Measure Theory to Random Processes, via Conditioning}. 2nd edition, Cambridge University Press, Cambridge, 2003.

\bibitem{CR1988}
B. Chauvin and A. Rouault.
\newblock KPP equation and supercritical branching brownian motion in the subcritical speed area. Application to spatial trees.
\newblock {\em Probability Theory and Related Fields} \textbf{80} (1988) 299 -- 314.

\bibitem{E2000}
J. Engl\"{a}nder.
\newblock On the volume of the supercritical super-Brownian sausage conditioned on survival.
\newblock {\em Stochastic Processes and their Applications}  \textbf{88} (2000) 225 -- 243.

\bibitem{E2003}
J. Engl\"{a}nder and F. den Hollander:
\newblock Survival asymptotics for branching Brownian motion in a Poissonian trap field.
\newblock {\em Markov Processes and Related Fields} \textbf{9} (2003) 363 -- 389.

\bibitem{E2007}
J. Engl\"{a}nder.
\newblock Branching diffusions, superdiffusions and random media.
\newblock {\em Probability Surveys} \textbf{4} (2007) 303 -- 364.

\bibitem{E2008}
J. Engl\"{a}nder.
\newblock Quenched law of large numbers for branching Brownian motion in a random medium.
\newblock {\em Annales de l'Institut Henri Poincar\'e - Probabilit\'es et Statistiques} \textbf{44} (2008) 490 -- 518.

\bibitem{E2015}
J. Engl\"{a}nder. \emph{Spatial Branching in Random Environments and with Interaction}, World Scientific, Singapore, 2015.

\bibitem{G1996}
J. C. Gruet and Z. Shi.
\newblock The occupation time of Brownian motion in a ball,
\newblock {\em Journal of Theoretical Probability}, 9 (2), 429 -- 445, 1996.

\bibitem{KT1975}
S. Karlin and M. Taylor.
\newblock {\em A First Course in Stochastic Processes}. Academic Press, New York, 1975.

\bibitem{K2005}
A. E. Kyprianou.
\newblock Asymptotic radial speed of the support of supercritical branching Brownian motion and super-Brownian motion in $R^d$.
\newblock{\em Markov Processes and Related Fields} \textbf{11} (2005) 145 -- 156.

\bibitem{KMP2012}
A. E. Kyprianou, A. Murillo-Salas and J. L. Perez.
\newblock An application of the backbone decomposition to supercritical super-Brownian motion with a barrier.
\newblock {\em Journal of Applied Probability} \textbf{49} (2012) 671 -- 684.

\bibitem{KR2012}
A. E. Kyprianou and Y.-X. Ren.
\newblock Backbone decomposition for continuous-state branching processes with immigration.
\newblock {\em Statistics and Probability Letters} \textbf{82} (2012) 139 -- 144.

\bibitem{LV2009}
J. F. Le Gall and A. V\'eber.
\newblock Escape probabilities for branching Brownian motion among mild obstacles.
\newblock {\em Journal of Theoretical Probability} \textbf{25} (2012) 505 -- 535.

\bibitem{L1992}
R. Lyons.
\newblock Random walks, capacity and percolation on trees.
\newblock {\em Annals of Probability} \textbf{20} (1992) 2043 -- 2088.

\bibitem{MK1975}
H.P. McKean.
\newblock Application of Brownian motion to the equation of Kolmogorov-Petrovskii-Piskunov.
\newblock {\em Communications in Pure and Applied Mathematics} \textbf{28} (1975) 323 -- 331.

\bibitem{MK1976}
H.P. McKean.
\newblock A Correction to ``Application of Brownian motion to the equation of Kolmogorov-Petrovskii-Piskunov''.
\newblock {\em Communications in Pure and Applied Mathematics} \textbf{29} (1976) 553 -- 554.

\bibitem{OC2013}
M. \"{O}z and M. \c{C}a\u{g}lar.
\newblock Tail probability of avoiding Poisson traps for branching Brownian motion.
\newblock {\em Statistics and Probability Letters} \textbf{83} (2013) 2034 -- 2038.

\bibitem{O2003}
B. \O ksendal.
\newblock {\em Stochastic Differential Equations: An Introduction with Applications}. 6th edition, Springer-Verlag, Berlin Heidelberg, 2003.

\bibitem{BSS2013}
S. Sagitov and A. Shaimerdenova.
\newblock Decomposition of Supercritical Linear-Fractional Branching Processes.
\newblock {\em Applied Mathematics} \textbf{4} (2013)  352 -- 359.


\end{thebibliography}

\end{document}